\newtheorem{Defi}{Definition}
\newtheorem{Def}{Definition}[subsection]
\newtheorem{prop}[Def]{Proposition}
\newtheorem{lemma}[Def]{Lemma}
\newtheorem{introthm}{Theorem}
\newtheorem{introcor}[introthm]{Corollary}
\newtheorem{introprop}[introthm]{Proposition}
\DeclareMathOperator\Ima{Im}
\DeclareMathOperator\Ree{Re}
\DeclareMathOperator\diag{diag}
\newcommand{\R}{\mathbb{R}}
\newcommand{\C}{\mathbb{C}}
\newcommand{\p}{\mathbb{P}}
\newcommand{\Z}{\mathbb{Z}}
\newcommand{\F}{\mathbb{F}}
\newcommand{\Or}{\mathcal{O}}
\newcommand{\tor}{\mathfrak{t}}
\newcommand{\lied}{\mathcal{L}}
\title{SU(n)-structures through quotient by torus actions}
\author{Quentin Peres}
\begin{document}
\address{Quentin Peres \\Institut Camille Jordan, Université Claude Bernard Lyon 1 \\ 69100 Villeurbanne,France}
\email{peres@math.univ-lyon1.fr}
\setcounter{page}{1}
\maketitle
\begin{abstract} We show that if $(X,g,J,\omega)$ is a Kähler manifold with an $SU(n+s)$-structure and a Hamiltonian holomorphic action of a compact torus $T^s$, then the usual symplectic quotient $Y$ inherits an $SU(n)$-structure provided the existence of special $1$-forms on $X$, called twist forms. We then give several applications of our results: on complex projective spaces, on cones over Fano Kähler-Einstein manifold and on toric $\C\p^1$ bundles. We also study the geometry behind these structures in the case of $n=3$.
\end{abstract} 
\section{Introduction}
 String theory requires, in addition to the four-dimensional spacetime, a six-dimensional compact manifold, called the compactification space. To obtain realistic particle physics model, this space should admit an $SU(3)$-structure with torsion \cite{Gra_a_2006}. Such structures were considered in \cite{EellsSalamon} where Eells and Salamon use the Levi-Civita connection of $\C\p^3$ to transform the standard integrable complex structure of $\C\p^3$ in a non-integrable complex structure. In \cite{Larfors_2010,Larfors_2013}, Larfors, Lüst and Tsimpis also study the question of producing $SU(3)$-structures with torsion on $6$-dimensional toric manifolds.\\
\indent Let us recall some basic facts about $SU(n)$-structures. A smooth manifold $X$ of real dimension $2n$ admits an $SU(n)$-structure if $X$ is an almost Hermitian manifold with a complex volume form, i.e $X$ has a Hermitian metric $h$, an almost complex structure $J$, an almost symplectic form $\omega$ which is $(1,1)$ with respect to $J$, such that $h,\omega$ are $J$-invariant with $\omega=-\Ima(h)$, and $\Omega$ a nowhere-vanishing complex volume form of type $(n,0)$. Equivalently, $X$ admits an $SU(n)$-structure if :
\begin{enumerate}[i)]
    \item There exists a nondegenerate real $2$ form $\omega$.
    \item There exists a locally decomposable nowhere-vanishing complex $n$ form $\Omega$.
    \item $\Omega,\omega$ satisfy the $SU(n)$-equations:
    \begin{align}
        \Omega\wedge\omega&=0\label{SU(n)eq1},\\
        \Omega\wedge\overline{\Omega}&=c_n\omega^n,\label{SU(n)eq2}
    \end{align}
\end{enumerate}
where $c_n=\frac{2^n}{n!}(-i)^{n^2}$ and $\omega^n=\omega\wedge\cdots\wedge\omega$ $n$-times. Given this data, we recover the almost complex structure $J$ and the Hermitian metric $h$ such that $\Omega$ is $(n,0)$ and $\omega$ is $(1,1)$ as in \cite{cpxstructuresun}
. \\
\indent In \cite{Larfors_2010}, Larfors, Lüst and Tsimpis give an explicit method to construct $SU(3)$-structures on $6$-dimensional toric manifolds. Their approach consists in using the standard $SU(n)$-structure on $\C^n$ and pushing everything down to the toric quotient by algebraic manipulations. Our goal is to generalize this construction in the framework of Kähler geometry.  
\subsection{A twist construction}
\indent Let $(X,h_X,J_X,\omega_X,\Omega_X)$ be a Kähler manifold with an $SU(n+s)$-structure given by $(\omega_X,\Omega_X)$. Consider a Hamiltonian and holomorphic action of a compact torus $T^s$ on $X$ with moment map $\mu$. We let $\tor :=Lie(T^s)$ be the Lie algebra of $T^s$ and for an element $a\in\tor$, we denote $V_a$ the vector field induced by $a$ on $X$. \\ 
\indent Denote $M$ a regular level set of $\mu$, $Y=M/\!\!/T^s$ the symplectic quotient which is an orbifold of real dimension $2n$, $i : Y\hookrightarrow M$ the inclusion map and $\pi : M\to Y$ the projection map. Starting from $(\Omega_X,\omega_X)$ the $SU(n+s)$-structure on $X$, we aim to construct $(\Omega_Y,\omega_Y)$ an $SU(n)$-structure on $Y$. For that, we shall construct basic forms on $M$ satisfying the same equations as $(\ref{SU(n)eq1})$ and $(\ref{SU(n)eq2})$.\\
\indent Descending $\Omega_X,\omega_X$ to $Y$ requires to introduce twist forms:
\begin{Defi}\label{twistformdef}
    Let $(X,J,T^s)$ be a complex manifold with an action of $T^s$. A $1$-form $\alpha$ on $X$ is a twist form if:
    \begin{enumerate}
        \item $\alpha$ is a nowhere vanishing $(1,0)$-form with respect to $J$.
        \item $\alpha$ is horizontal: $\forall a\in\tor,\alpha(V_a)=0$.
        \item $\alpha$ is charge definite : there exists a linear form $k_\alpha : \tor\to\C$ such that \[\forall a\in\tor, \lied_{V_a}\alpha=k_\alpha(a)\alpha.\]
    \end{enumerate}
    In that case, $k_\alpha$ is called the charge of $\alpha$.
\end{Defi}
 Similarly, we say that $\Omega_X$ is charge definite of charge $q:\tor\to i\R$ if
 \[\forall a\in\tor, \lied_{V_a}\Omega_X=q(a)\Omega_X\]which is always the case for $T$-invariant Calabi-Yau cones \cite{Martelli_2008}. Our main goal is to prove the following theorem:
\begin{introthm}\label{twistsimpletheo}
        Let $(X,h_X,J_X,\omega_X,\Omega_X)$ be a Kähler manifold of complex dimension $n+s$ with $(\Omega_X,\omega_X)$ an $SU(n+s)$-structure on $X$ with charge operator $q$ taking values in the pure imaginary numbers. Consider a free, Hamiltonian and holomorphic action of a compact torus $T^s$ on $X$ with moment map $\mu$ and let $M$ be a regular level set of $\mu$. Assume there exists a twist form $\alpha_X$ on $X$ of charge $k_\alpha=\frac{q}{2}$. Then, for any basis $e_1,\cdots,e_s$ of $\tor$ with induced vector fields $V_{e_i}=V_i$, the forms: \[\Omega=i^s\overline{\alpha_M}\wedge\bigg(\overline{\alpha_M}\cdot(\iota_{V_1}\cdots\iota_{V_s}\Omega_X)_M\bigg), \,\omega=\omega_M-\frac{i} {\|\alpha_M\|^2}\alpha_M\wedge\overline{\alpha_M}\] descend to $Y$ and satisfy the equations:
        \begin{align}
            \Omega\wedge\omega&=0, \label{eq1theo}\\
            \Omega\wedge\overline{\Omega}&=c_n\|\alpha_M\|^4\|V_1\|^2\cdots\|V_s\|^2\omega^n,
        \end{align}
        where $\overline{\alpha_M}\cdot$ is the contraction of forms by the complex vector field induced by the Hermitian metric $h_X$ and for a form $\Phi_X$ on $X$, $\Phi_M$ is the pullback of $\Phi_X$ on $M$. 
\end{introthm}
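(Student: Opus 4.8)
The plan is to prove the statement in two stages: first that the two forms $\Omega$ and $\omega$ are \emph{basic} on $M$ (horizontal and $T^s$-invariant), so that they descend to $Y$; and second that they satisfy the two $SU(n)$-equations, which I would check pointwise in a unitary coframe adapted to the torus action. The charge hypothesis $k_\alpha = q/2$ enters decisively only in the descent of $\Omega$, so that is where I would concentrate the conceptual work; the equations themselves then become an essentially algebraic verification.

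For the descent I would treat $\omega$ first. Horizontality is immediate: the moment-map identity $\iota_{V_a}\omega_X = d\mu_a$ restricts to $\iota_{V_a}\omega_M = 0$ on the level set $M$, while $\alpha(V_a) = 0$ (and hence $\overline{\alpha}(V_a)=0$, $V_a$ being real) kills the contraction of $\alpha_M\wedge\overline{\alpha_M}$. Invariance follows because $\lied_{V_a}\omega_X = d\iota_{V_a}\omega_X = d(d\mu_a) = 0$ (using $d\omega_X=0$), because $\lied_{V_a}(\alpha\wedge\overline\alpha) = (k_\alpha(a)+\overline{k_\alpha(a)})\alpha\wedge\overline\alpha = 0$ as $k_\alpha(a)\in i\R$, and because the same reality forces $\lied_{V_a}\|\alpha\|^2 = 2\Ree(k_\alpha(a))\|\alpha\|^2 = 0$ (the action being holomorphic and Hamiltonian, hence isometric, so the metric is $T^s$-invariant). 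For $\Omega$, write $W = \overline{\alpha}^\sharp$ for the metric dual and $\beta = (\iota_{V_1}\cdots\iota_{V_s}\Omega_X)_M$, so that $\Omega = i^s\,\overline{\alpha_M}\wedge\iota_W\beta$. Horizontality uses that interior products anticommute and that $\iota_{V_a}(\iota_{V_1}\cdots\iota_{V_s}\Omega_X)=0$ (a repeated contraction), together with $\overline\alpha(V_a)=0$. Invariance is the heart of the matter: since $[V_a,V_b]=0$ for a torus, $\lied_{V_a}$ commutes with the contractions and $\lied_{V_a}(\iota_{V_1}\cdots\iota_{V_s}\Omega_X) = q(a)\,\iota_{V_1}\cdots\iota_{V_s}\Omega_X$; moreover isometry invariance gives $[V_a,W] = (\lied_{V_a}\overline\alpha)^\sharp = -k_\alpha(a)W$, so by the Leibniz rule $\lied_{V_a}\iota_W\beta = \iota_W\lied_{V_a}\beta + \iota_{[V_a,W]}\beta = (q(a)-k_\alpha(a))\iota_W\beta$. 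Combining with $\lied_{V_a}\overline{\alpha_M} = -k_\alpha(a)\overline{\alpha_M}$, the total charge of $\Omega$ is $-k_\alpha(a) + (q(a)-k_\alpha(a)) = q(a) - 2k_\alpha(a)$, which vanishes \emph{exactly} under the hypothesis $k_\alpha = q/2$. This is the place where the factor $\tfrac12$ is forced: $\overline\alpha$ contributes its charge twice, once as a wedge factor and once through the metric dual.

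For the $SU(n)$-equations I would fix $p\in M$ and choose a unitary $(1,0)$-coframe $\theta^1,\dots,\theta^{n+s}$ with $\Omega_X = \theta^1\wedge\cdots\wedge\theta^{n+s}$, adapted so that $\theta^{n+1},\dots,\theta^{n+s}$ are dual to the complexified orbit directions $V_a^{1,0}$ and so that $\theta^1 = \alpha/\|\alpha\|$ (both possible since $\alpha$ is a horizontal $(1,0)$-form). A short determinant computation gives $\iota_{V_1}\cdots\iota_{V_s}\Omega_X = \lambda\,\theta^1\wedge\cdots\wedge\theta^n$ with $|\lambda|^2 = \det(\langle V_a,V_b\rangle)$ the Gram determinant of the orbit frame; contracting with $W$ and wedging then yields, on the horizontal space $H_p\cong T_{\pi(p)}Y$ and with the convention $\omega_X = \tfrac{i}{2}\sum_j\theta^j\wedge\overline{\theta^j}$, the expressions $\Omega = i^s\lambda\|\alpha\|^2\,\overline{\theta^1}\wedge\theta^2\wedge\cdots\wedge\theta^n$ and $\omega = \tfrac{i}{2}\big(\overline{\theta^1}\wedge\theta^1 + \sum_{j\ge 2}\theta^j\wedge\overline{\theta^j}\big)$. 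Setting $\tilde\theta^1 = \overline{\theta^1}$ and $\tilde\theta^j = \theta^j$ for $j\ge 2$ produces a unitary coframe for the twisted complex structure $J_Y$ on $H_p$, in which $\Omega$ is a scalar multiple of $\tilde\theta^1\wedge\cdots\wedge\tilde\theta^n$ and $\omega = \tfrac{i}{2}\sum_{j}\tilde\theta^j\wedge\overline{\tilde\theta^j}$. The first equation $\Omega\wedge\omega = 0$ is then immediate (every term of $\omega$ repeats a factor of $\Omega$), and the second reduces to the defining complex-volume normalization applied to $\{\tilde\theta^j\}$, producing the constant $|\lambda|^2\|\alpha\|^4 = \|\alpha_M\|^4\,\|V_1\|^2\cdots\|V_s\|^2$.

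The main obstacle I anticipate is twofold. Conceptually, the invariance of $\Omega$ is where everything hinges, and the clean cancellation $q - 2k_\alpha = 0$ must be extracted from the identity $[V_a,\overline\alpha^\sharp] = -k_\alpha(a)\overline\alpha^\sharp$, which in turn relies on the action being isometric so that $\sharp$ commutes with $\lied_{V_a}$. Technically, the delicate bookkeeping is the constant in the second equation: one must verify that the orbit factor is genuinely the Gram determinant $\det(\langle V_a,V_b\rangle)$ (which is what transforms correctly, by $|\det A|^2$, under a change of basis $e_a\mapsto\sum A_{ab}e_b$ that rescales $\Omega$ by $\det A$), and that it equals $\|V_1\|^2\cdots\|V_s\|^2$ for the chosen frame; checking that $\{\tilde\theta^j\}$ is genuinely unitary for $J_Y$, i.e. that $g$ stays Hermitian after flipping the type of the $\theta^1$-plane, is the accompanying compatibility one should not skip.
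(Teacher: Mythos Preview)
Your approach is correct and takes a genuinely different route from the paper. The paper proceeds by building a chain of intermediate pairs: first $\Omega_M := (\iota_{\xi_1}\cdots\iota_{\xi_s}\Omega_X)_M$ (with $\xi_k$ the $(1,0)$-parts of the $V_k$) together with $\omega_M$, showing by successive contractions of the ambient $SU(n+s)$-equations that these satisfy $\Omega_M\wedge\omega_M=0$ and $\Omega_M\wedge\overline{\Omega_M}=c_n\prod_k\|V_k\|^2\,\omega_M^n$; then $\hat\Omega := \overline{\alpha_M}\cdot\Omega_M$ and $\hat\omega := \omega_M - \tfrac{i}{2\|\alpha_M\|^2}\alpha_M\wedge\overline{\alpha_M}$, deriving analogous equations at rank $n-1$; and finally $\Omega = \overline{\alpha_M}\wedge\hat\Omega$, $\omega = \hat\omega - \tfrac{i}{2\|\alpha_M\|^2}\alpha_M\wedge\overline{\alpha_M}$, climbing back to rank $n$ via the identity $\hat\omega^n = 0$. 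Each step is a short algebraic contraction of the previous one, and the staging is chosen so that the same mechanism iterates to give the multiple-twist Theorem~\ref{multipletwisttheo}.

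Your proof instead handles descent by a direct charge computation---your identification of \emph{why} the half-charge is forced ($\overline\alpha$ contributes twice, once as a wedge factor and once through its metric dual) is exactly the point, and matches the paper's observation that $\lied_{V_a}\overline{\alpha_M}^\sharp = -\tfrac{q(a)}{2}\overline{\alpha_M}^\sharp$---and then verifies the two $SU(n)$-equations by a single pointwise coframe calculation. What this buys is a transparent geometric picture: substituting $\tilde\theta^1 = \overline{\theta^1}$ makes explicit that the twist \emph{reverses} the almost complex structure on the complex line singled out by $\alpha$, which explains at a glance why the resulting $J_Y$ differs from the one obtained by K\"ahler reduction (as the paper notes after Corollary~\ref{cor1}). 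What the paper's approach buys is modularity: its intermediate propositions are precisely the ingredients reused in the induction for Theorem~\ref{multipletwisttheo}. Finally, your remark that the orbit factor should naturally be the Gram determinant $\det(\langle V_a,V_b\rangle)$ rather than $\prod_k\|V_k\|^2$ is well taken; the paper's derivation of~(\ref{eqintermediaire2}) records only the diagonal contractions $\omega_X(\xi_k,\overline{\xi_k})$, so the stated product is literally correct only when the $V_k$ happen to be mutually orthogonal.
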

From that, we readily infer that:
\begin{introcor}\label{cor1}
    The toric quotient $Y=M/\!\!/T^s$ admits an $SU(n)$-structure. 
\end{introcor}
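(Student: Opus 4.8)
The statement follows by normalizing the output of Theorem~\ref{twistsimpletheo} and then invoking the equivalent characterization (i)--(iii) of an $SU(n)$-structure recalled in the introduction. Theorem~\ref{twistsimpletheo} already produces forms $\Omega$ and $\omega$ that descend to $Y$ and satisfy $\Omega\wedge\omega=0$ together with $\Omega\wedge\overline{\Omega}=c_n\,\lambda^2\,\omega^n$, where I write $\lambda:=\|\alpha_M\|^2\,\|V_1\|\cdots\|V_s\|$. Thus the first $SU(n)$-equation $(\ref{SU(n)eq1})$ holds exactly, and the only discrepancy with $(\ref{SU(n)eq2})$ is the positive scalar factor $\lambda^2$. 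The plan is to remove this factor by a global rescaling of $\Omega$, and then check that the rescaled pair meets the remaining requirements of (i) and (ii), namely nondegeneracy of the $2$-form and nonvanishing plus local decomposability of the $n$-form.

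First I would verify that $\lambda$ is a well-defined, smooth, strictly positive function descending to $Y$. Positivity and smoothness are clear once the three factors are nonvanishing: $\alpha_M\neq0$ because $\alpha_X$ is a nowhere vanishing twist form, and $\|V_1\|\cdots\|V_s\|\neq0$ because the $T^s$-action is free on $M$, so the induced vector fields $V_1,\dots,V_s$ are pointwise linearly independent. To descend to $Y$ it suffices that $\lambda$ be $T^s$-invariant, the action being by isometries (a holomorphic Hamiltonian action on a Kähler manifold preserves $h_X$). The norms $\|V_i\|$ are then invariant, and $\|\alpha_X\|^2$ is invariant because $\alpha_X$ is charge definite with purely imaginary charge $k_\alpha=\tfrac{q}{2}$: since $\lied_{V_a}h_X=0$, for every $a\in\tor$ one gets $\lied_{V_a}\|\alpha_X\|^2=\bigl(k_\alpha(a)+\overline{k_\alpha(a)}\bigr)\|\alpha_X\|^2=0$. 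Setting $\Omega_Y:=\lambda^{-1}\Omega$ and $\omega_Y:=\omega$, the factor is absorbed: $\Omega_Y\wedge\omega_Y=\lambda^{-1}\,\Omega\wedge\omega=0$ and $\Omega_Y\wedge\overline{\Omega_Y}=\lambda^{-2}\,\Omega\wedge\overline{\Omega}=c_n\,\omega_Y^n$, which are precisely $(\ref{SU(n)eq1})$ and $(\ref{SU(n)eq2})$.

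It then remains to verify (i) and (ii). For (ii), local decomposability is inherited from $\Omega_X$: the complex volume form $\Omega_X$ is locally decomposable by the $SU(n+s)$-structure hypothesis, and the interior products $\iota_{V_1}\cdots\iota_{V_s}$, the metric contraction $\overline{\alpha_M}\cdot$, the final wedge with $\overline{\alpha_M}$, and the scaling by $\lambda^{-1}$ all send locally decomposable forms to locally decomposable forms. Nonvanishing of $\Omega$ is where the hypotheses genuinely enter: contracting the nowhere vanishing $(n+s,0)$-form $\Omega_X$ by the linearly independent fields $V_1,\dots,V_s$ (only their $(1,0)$-parts contribute, since $\Omega_X$ has no antiholomorphic legs) yields a nonzero $(n,0)$-form, and the horizontality of $\alpha_X$ ensures that the subsequent metric contraction and wedge by $\overline{\alpha_M}$ do not annihilate it, so $\Omega_Y=\lambda^{-1}\Omega\neq0$. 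Condition (i) is then automatic: from $\omega_Y^n=c_n^{-1}\,\Omega_Y\wedge\overline{\Omega_Y}$, and since a nonvanishing locally decomposable $n$-form has $\Omega_Y\wedge\overline{\Omega_Y}\neq0$, we obtain $\omega_Y^n\neq0$ everywhere, so $\omega_Y$ is a nondegenerate real $2$-form. With (i)--(iii) all established, the characterization of the introduction shows that $(\Omega_Y,\omega_Y)$ is an $SU(n)$-structure on $Y$.

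The substantive point, beyond the bookkeeping of the rescaling, is the nonvanishing of $\Omega$: this is exactly where freeness of the action (independence of the $V_i$) and horizontality together with the nonvanishing of the twist form are used. The companion fact making the normalization legitimate over the whole quotient is the $T^s$-invariance of $\lambda$, which rests on the purely imaginary charge assumption $k_\alpha=\tfrac{q}{2}$. Everything else is formal.
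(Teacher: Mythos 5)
Your overall route is the same as the paper's: Theorem~\ref{twistsimpletheo} produces basic forms satisfying the $SU(n)$-equations up to the positive factor $\|\alpha_M\|^4\|V_1\|^2\cdots\|V_s\|^2$, and since this factor is a $T^s$-invariant function it descends to $Y$ and can be absorbed by rescaling $\Omega$. This observation is essentially the paper's entire proof, and your verification of it is sound: invariance of $\|V_k\|^2$ from the isometric action, invariance of $\|\alpha_M\|^2$ from the purely imaginary charge, and strict positivity from freeness of the action together with the nonvanishing of the twist form.

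However, your deduction of condition (i) from condition (ii) contains a genuine error. The lemma you invoke --- ``a nonvanishing locally decomposable $n$-form has $\Omega_Y\wedge\overline{\Omega_Y}\neq 0$'' --- is false: on $\R^{2n}$ the form $dx_1\wedge\cdots\wedge dx_n$ is nowhere vanishing and decomposable, yet its wedge with its own conjugate vanishes identically. The condition $\Omega\wedge\overline{\Omega}\neq 0$ is precisely the open condition that lets a decomposable complex $n$-form define an almost complex structure; it must be proved, and cannot be extracted from nonvanishing alone. Nor can you argue that $\Omega_Y$ is ``of type $(n,0)$'' on $Y$: with respect to the complex structure $Y$ carries a priori (Kähler reduction), $\Omega_Y$ has an antiholomorphic leg coming from $\overline{\alpha_M}$ and is of type $(n-1,1)$; the almost complex structure making it $(n,0)$ is exactly what is being constructed. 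The correct argument runs in the opposite direction and is carried out upstairs on $M$, where the pure $J_X$-types are available. Writing $\Omega=i^s\,\overline{\alpha_M}\wedge\hat\Omega$ with $\hat\Omega:=\overline{\alpha_M}\cdot(\iota_{V_1}\cdots\iota_{V_s}\Omega_X)_M$ a nonzero decomposable $(n-1,0)$-form (your nonvanishing argument, which is fine), one has, up to sign,
\[\Omega\wedge\overline{\Omega}=\pm\,\alpha_M\wedge\overline{\alpha_M}\wedge\hat\Omega\wedge\overline{\hat\Omega}\neq 0,\]
because $\alpha_M$ is independent of the $(1,0)$-factors of $\hat\Omega$ (indeed $\alpha_M\wedge\hat\Omega=\|\alpha_M\|^2\Omega_M\neq 0$, using that $\alpha_M\wedge\Omega_M$ is a horizontal $(n+1,0)$-form, hence zero), and $(1,0)$- and $(0,1)$-covectors of the integrable structure $J_X$ are jointly linearly independent. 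Once $\Omega\wedge\overline{\Omega}\neq 0$ is known, the second equation of Theorem~\ref{twistsimpletheo}, whose scalar factor is strictly positive, yields $\omega^n\neq 0$, i.e.\ nondegeneracy of $\omega_Y$ --- the implication you wanted, but derived in the reverse order.
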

Note that the hermitian structure underlying the $SU(n)$-structure obtained on $Y$ is not the one obtained by Kähler reduction. In fact, the examples detailed in Section $4$ suggest that $\omega$ is never closed and the underlying almost complex structure never integrable. \\ \\
\noindent\textbf{Remark 1.}
    If $\alpha$ is a twist form for a torus $T$, then it is a twist form for any subtorus $T'\subset T$. In particular, Theorem \ref{twistsimpletheo} can be applied for various subtorus. \\ \\
\indent We then generalize Theorem \ref{twistsimpletheo} using multiple twist forms and an orthogonality assumption, see Definition \ref{orthogonaldef}. 
\begin{introthm}\label{multipletwisttheo}
    Let $(X,h_X,J_X,\omega_X,\Omega_X)$ be a Kähler manifold of complex dimension $n+s$ with $(\Omega_X,\omega_X)$ an $SU(n+s)$-structure on $X$ with charge operator $q$ taking values in the pure imaginary numbers. Consider a free, Hamiltonian and holomorphic action of a compact torus $T^s$ on $X$ with moment map $\mu$ and let $M$ be a regular level set of $\mu$. Let $\{\alpha^i_X\}_{1\leq i\leq l}$ be a family of orthogonal twist forms on $X$ of charges $k_{\alpha_1}+\cdots+k_{\alpha_l}=\frac{q}{2}$. Then, for any basis $e_1,\cdots,e_s$ of $\tor$ with induced vector fields $V_{e_1}=V_i$, the forms:
    \[\Omega=i^s\overline{\alpha^1}_M\wedge\cdots\wedge\overline{\alpha^l}_M\wedge\bigg(\overline{\alpha^1}_M\cdot...\cdot\overline{\alpha^l}_M\cdot(\iota_{V_1}\cdots\iota_{V_s}\Omega_X)_M\bigg), \]\[\omega=\omega_M-\sum_{k=1}^l\frac{i}{\|\alpha^k_M\|^2}\alpha^k_M\wedge\overline{\alpha^k}_M\]
    are basic and satisfy the equations:
    \begin{align}
        \Omega\wedge\omega&=0, \label{eq1theo2}\\
        \Omega\wedge\overline{\Omega}&=(-1)^{l+1}c_n\|\alpha^1_M\|^4\cdots\|\alpha^l_M\|^4\|V_1\|^2\cdots\|V_s\|^2\omega^n. \label{eq2theo2}
    \end{align}
\end{introthm}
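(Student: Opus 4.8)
The plan is to verify the two assertions of Theorem \ref{multipletwisttheo} in turn: first that $\Omega$ and $\omega$ are basic, hence descend to $Y$, and then that they satisfy \eqref{eq1theo2}--\eqref{eq2theo2}. Since basicness and the two equations are first-order, respectively pointwise, conditions, once descent is secured I would fix a point $p\in M$ and reduce the equations to an identity in the exterior algebra at $p$, computed in a unitary coframe adapted to the twist forms and to the torus directions.

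For basicness I would treat horizontality and $T^s$-invariance separately. Horizontality of $\omega$ comes from the moment map identity $\iota_{V_a}\omega_X=d\mu_a$: restricting to $M$ gives $\iota_{V_a}\omega_M=d(\mu_a\!\mid_M)=0$, and each correction term is horizontal because $\alpha^k(V_a)=0$. For $\Omega$, one has $\iota_{V_a}(\iota_{V_1}\cdots\iota_{V_s}\Omega_X)=0$ since $V_a\in\Span(V_1,\dots,V_s)$ repeats a contraction; as interior products anticommute, this horizontality survives the metric contractions by $(\overline{\alpha^k})^\sharp$ and the wedges with the horizontal forms $\overline{\alpha^k}$. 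Invariance of $\omega$ is immediate once one checks, via the quotient rule, that the charge factor of $\alpha^k\wedge\overline{\alpha^k}$ cancels that of $\|\alpha^k_M\|^2$. The key point is invariance of $\Omega$: using $[V_a,V_i]=0$, the invariance of $h_X$, and $\lied_{V_a}\Omega_X=q(a)\Omega_X$, a direct computation gives
\[\lied_{V_a}\Omega=\Big(2\sum_{k=1}^l\overline{k_{\alpha_k}(a)}+q(a)\Big)\Omega,\]
and the hypotheses $\sum_k k_{\alpha_k}=\tfrac q2$ together with $q$ taking values in $i\R$ force the bracket to vanish, since $\overline{\sum_k k_{\alpha_k}(a)}=\overline{q(a)/2}=-q(a)/2$.

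To prove the two equations I would exploit the orthogonality hypothesis (Definition \ref{orthogonaldef}) to choose at $p$ a unitary coframe $\zeta^1,\dots,\zeta^{n+s}$ for the ambient $SU(n+s)$-structure with $\zeta^k=\alpha^k/\|\alpha^k_M\|$ for $1\le k\le l$, with $\zeta^{n+1},\dots,\zeta^{n+s}$ dual to the $(1,0)$-parts of $V_1,\dots,V_s$, and with $\zeta^{l+1},\dots,\zeta^n$ spanning the remaining horizontal directions that descend to $Y$. In this frame $\Omega_X=\zeta^1\wedge\cdots\wedge\zeta^{n+s}$, the contraction $\iota_{V_1}\cdots\iota_{V_s}\Omega_X$ yields a determinant factor whose modulus squared accounts for $\|V_1\|^2\cdots\|V_s\|^2$, and the successive metric contractions $\overline{\alpha^k}\cdot$ delete $\zeta^1,\dots,\zeta^l$ and produce $\|\alpha^1_M\|^2\cdots\|\alpha^l_M\|^2$. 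The upshot is that $(\Omega,\omega)$ is precisely the $SU(n)$-structure obtained from the ambient one by conjugating the complex structure along the $l$ twist directions: $\Omega$ is a nonzero multiple of $\overline{\zeta^1}\wedge\cdots\wedge\overline{\zeta^l}\wedge\zeta^{l+1}\wedge\cdots\wedge\zeta^n$, while the subtraction of $\tfrac{i}{\|\alpha^k_M\|^2}\alpha^k\wedge\overline{\alpha^k}$ reverses the sign of $\zeta^k\wedge\overline{\zeta^k}$, so that $\omega$ is the fundamental form of the flipped frame. Equation \eqref{eq1theo2} then follows at once, as $\Omega$ already contains each flipped holomorphic factor once, so wedging with any diagonal term $\zeta^j\wedge\overline{\zeta^j}$ of $\omega$ repeats a factor and vanishes; and \eqref{eq2theo2} reduces to the model relation $\Omega\wedge\overline\Omega=c_n\,\omega^n$ for the flipped coframe, up to the positive factor $\|\alpha^1_M\|^4\cdots\|\alpha^l_M\|^4\|V_1\|^2\cdots\|V_s\|^2$ and an overall sign.

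I expect the main obstacle to be the constant-and-sign bookkeeping in \eqref{eq2theo2}. Several sign sources must be reconciled: the reordering of the $l$ conjugated factors $\overline{\zeta^k}$ past their conjugates in $\Omega\wedge\overline\Omega$, the factor produced in $\omega^n$ by the sign reversal of the $l$ twist directions, and the conventions hidden in the metric contraction $\overline{\alpha^k}\cdot$; I would track each carefully, expecting them to combine to the stated power of $-1$. The second delicate point is the descent of $\omega$: one must verify that, after restriction to $M$, the orbit and moment-map-normal directions drop out so that $\omega_M$ reduces to the fundamental form of the $n$ horizontal directions, which is exactly what $\iota_{V_a}\omega_M=0$ guarantees. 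Throughout, the orthogonality hypothesis is essential, as it is what diagonalises the successive contractions $\overline{\alpha^k}\cdot$ and prevents cross terms; the positivity of all norm factors, together with $\Omega\neq0$, then also yields nondegeneracy of $\omega$, giving the genuine $SU(n)$-structure of Corollary \ref{cor1}.
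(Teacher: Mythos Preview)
Your proposal is correct and takes a genuinely different route from the paper's own proof.

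The paper argues by induction on $l$, treating the case $l=2$ in detail and indicating how the general case follows by iteration. Concretely, it applies the single-twist machinery of Section~2.1 to the first twist form to obtain intermediate forms $(\hat\Omega',\hat\omega')$ satisfying analogues of \eqref{eqintermediate3}--\eqref{eqintermediate4}, then contracts those equations by $\overline{\beta_M}$ and $\beta_M$, using the orthogonality $\overline{\beta_M}\cdot\alpha_M=0$ to kill cross terms, and thereby produces the next-level equations \eqref{eq3proof}--\eqref{eq4proof}. The final identities are then read off after one last algebraic manipulation relating $\omega^n$ to $\alpha_M\wedge\overline{\alpha_M}\wedge\beta_M\wedge\overline{\beta_M}\wedge\hat\omega^{n-2}$. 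So the paper never leaves the language of successive contractions of the $SU$-equations, and the role of orthogonality is to make each contraction step close up.

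Your approach is instead a direct pointwise verification: after establishing basicness by the same charge-cancellation argument, you fix $p\in M$, choose a unitary $(1,0)$-coframe with the first $l$ legs along the normalised twist forms, the last $s$ legs along the torus directions, and the middle legs spanning the remaining horizontal space, and then compute $\Omega$ and $\omega$ explicitly in that frame. This has the conceptual payoff of exhibiting $(\Omega,\omega)$ as the $SU(n)$-structure obtained from the ambient one by conjugating $J_X$ along the $l$ twist directions, which makes \eqref{eq1theo2} transparent and reduces \eqref{eq2theo2} to the model identity for a unitary coframe together with sign-and-norm bookkeeping. The orthogonality hypothesis is used in exactly the place you identify: it is what allows the adapted coframe to exist and what makes the contractions $\overline{\alpha^k}\cdot$ act diagonally. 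The trade-off is that the paper's inductive contraction argument avoids having to analyse the restriction $i^*$ to $M$ in coordinates (it works entirely with the already-restricted equations \eqref{eqintermediaire1}--\eqref{eqintermediaire2}), whereas your coframe approach, as you note, must handle the collapse of the $\zeta^{n+1},\dots,\zeta^{n+s}$ directions upon pullback to $M$; this is routine but does need to be written out. Both approaches leave the same constant $\|V_1\|^2\cdots\|V_s\|^2$ (which is really the Gram determinant of the $V_i$ when they are not orthogonal) to be tracked.
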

\noindent\textbf{Remark 2.} If we choose another basis of $\tor$, then $\Omega$ is multiplied by a real constant. Similarly, if we choose another family of orthogonal twist forms with the same charges that spans the same space, then $\Omega$ is invariant. 
\subsection{Existence and space of twist forms} Many (non-compact) Kähler Hamiltonian $T$-spaces with an $SU(n)$-structure are known in the literature, for example all toric Calabi-Yau cones \cite{1264601036}, and provide potential applications of Theorems \ref{twistsimpletheo} and \ref{multipletwisttheo}. However, both theorems require to assume the existence of a twist form of appropriate charge. This leads us to address the question of existence of twist forms. We first find an explicit family of holomorphic twist forms of polynomial type on $\C^N\backslash\{0\}$ with $N=2n$ even of charge $2$ for the diagonal action of $S^1$.
\begin{introprop}\label{su(3)cp3} Let $N=2n\geq 2$ be even and $M=(m_{ij})_{1\leq i,j\leq N}$ an invertible skew-symmetric $N\times N$ complex matrix. Then, the form: \[\alpha_M:={}^tZMdZ=\sum_{i,j=1}^Nz_im_{ij}dz_j\] is a holomorphic twist form of charge $2$ on $\C^N\backslash\{0\}$ for the diagonal action of $S^1$. Conversely, any holomorphic twist form of charge $2$ on $\C^N\backslash\{0\}$ for the diagonal action of $S^1$ is of this form.
\end{introprop}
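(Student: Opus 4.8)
The plan is to verify directly the three conditions of Definition \ref{twistformdef} (plus holomorphicity) for the forward direction, and then to run the argument in reverse for the converse, the only non-formal ingredient being a Hartogs-type removable-singularity argument.

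First, for the forward direction I would write $\alpha_M=\sum_{i,j}z_im_{ij}\,dz_j$, whose coefficient of $dz_j$ is the $j$-th entry of the row vector ${}^tZM$. Since $\alpha_M$ is manifestly of type $(1,0)$ with holomorphic (indeed linear) coefficients, condition (1) reduces to nowhere vanishing: ${}^tZM=0$ forces $Z=0$ because $M$ is invertible, so $\alpha_M$ has no zero on $\C^N\setminus\{0\}$. The generator of the diagonal $S^1$-action is $V=\sum_k(iz_k\partial_{z_k}-i\bar z_k\partial_{\bar z_k})$, for which $dz_j(V)=iz_j$; hence $\alpha_M(V)=i\,{}^tZMZ=i\sum_{i,j}m_{ij}z_iz_j$, and this vanishes identically precisely because $M$ is skew-symmetric, giving horizontality (condition (2)). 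Finally, using $\lied_V z_i=iz_i$ and $\lied_V dz_j=d(iz_j)=i\,dz_j$, the Leibniz rule yields $\lied_V(z_im_{ij}\,dz_j)=2i\,z_im_{ij}\,dz_j$, so $\lied_V\alpha_M=2i\,\alpha_M$, i.e. $\alpha_M$ is charge definite of charge $2$ (condition (3)). Thus $\alpha_M$ is a holomorphic twist form of charge $2$.

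For the converse, I would let $\alpha=\sum_j f_j\,dz_j$ be an arbitrary holomorphic twist form of charge $2$, so the $f_j$ are holomorphic on $\C^N\setminus\{0\}$. Computing $\lied_V\alpha$ exactly as above and using holomorphicity (so $V(f_j)=i\sum_k z_k\partial_{z_k}f_j=iE(f_j)$ with $E:=\sum_k z_k\partial_{z_k}$ the Euler operator), the charge condition $\lied_V\alpha=2i\,\alpha$ becomes $E(f_j)=f_j$ for every $j$, i.e. each $f_j$ is homogeneous of degree $1$. The crucial step—and the one I expect to be the only real obstacle—is to upgrade this to linearity: since $N=2n\geq2$, Hartogs' extension theorem removes the puncture and shows each $f_j$ extends holomorphically to all of $\C^N$; expanding in a power series about the origin and imposing $E(f_j)=f_j$ kills every homogeneous component except the degree-$1$ one, so $f_j=\sum_i m_{ij}z_i$ is linear. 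Setting $M=(m_{ij})$ gives $\alpha=\alpha_M$.

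It then remains to read off the two matrix conditions from the surviving hypotheses on $\alpha_M$. Horizontality $\alpha_M(V)=i\,{}^tZMZ\equiv0$ forces the symmetric part of $M$ to vanish, i.e. $M$ is skew-symmetric; and nowhere vanishing on $\C^N\setminus\{0\}$ means ${}^tZM\neq0$ for $Z\neq0$, i.e. $\ker M=\{0\}$, so $M$ is invertible. This recovers exactly the hypotheses of the statement (and, consistently, an invertible skew-symmetric matrix can exist only in even size, matching $N=2n$), completing the characterization.
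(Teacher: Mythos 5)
Your proof is correct and follows essentially the same route as the paper: the forward direction is the same computation (the paper merely packages it via the fundamental forms $\alpha_{ij}=z_idz_j-z_jdz_i$, whose horizontality and charge $2$ you verify directly), and the converse is the identical reduction of the charge condition to the Euler equation $E(f_j)=f_j$, hence linearity of the coefficients, with skew-symmetry forced by horizontality and invertibility by nowhere-vanishing. Your explicit Hartogs extension across the origin is in fact a detail the paper glosses over when it asserts that the Euler equation on $\C^N\setminus\{0\}$ forces the coefficients to be homogeneous polynomials.
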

We also prove that holomorphic twist forms on $\C^N\backslash\{0\}$ of charge strictly greater than $2$ do not exist, see Proposition \ref{nonexistence}. 
\subsection{Explicit examples and torsion}
Our first application is motivated by the example of an $SU(3)$-structure on $\C\p^3$ in \cite{Larfors_2013,Larfors_2010}. We generalize this example by using the family of Proposition \ref{su(3)cp3} in the case $N=4$. Theorem \ref{twistsimpletheo} then gives a whole new family of $SU(3)$-structures on $\C\p^3$, the example of Larfors, Lüst and Tsimpis being included in this family.
\begin{introthm}\label{cp3}
     Consider $(\C^4,\Omega_0,\omega_0)$ with its canonical $SU(4)$-structure and with complex holomorphic Euler vector field $\xi$. Let $M=(m_{ij})_{1\leq i,j\leq 4}$ be a skew-symmetric invertible $4\times 4$ complex matrix. Denote \[\alpha_M:={}^tZMdZ,\] the corresponding holomorphic twist form of charge $2$ on $\C^{4}\backslash\{0\}$. Then, the forms: \[\Omega_M:=\frac{1}{\|\alpha_M\|^2}\overline{\alpha_M}\wedge(\overline{\alpha_M}\cdot(\iota_\xi\Omega_0)),\, \omega_M:=\omega_0-\frac{i}{\|\alpha_M\|^2}\alpha_M\wedge\overline{\alpha_M}\] define an $SU(3)$-structure on $\C\p^3$.
\end{introthm}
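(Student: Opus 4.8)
The plan is to recognize Theorem \ref{cp3} as a direct application of Theorem \ref{twistsimpletheo} in the special case $n=3$, $s=1$, once all the required data are identified and the Euler field $\xi$ is reconciled with the real generator of the torus action. First I would take $X=\C^4\setminus\{0\}$ with its flat Kähler structure and canonical $SU(4)$-structure $(\omega_0,\Omega_0)$, and let $S^1$ act diagonally by $Z\mapsto e^{i\theta}Z$. This action is holomorphic and Hamiltonian with moment map $\mu(Z)=\tfrac12|Z|^2$, and it is free away from the origin; choosing the regular level set $M=\{|Z|^2=1\}=S^7$, the symplectic quotient is $Y=S^7/S^1=\C\p^3$, so Corollary \ref{cor1} delivers the desired $SU(3)$-structure once the hypotheses are verified.

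Next I would check the two charge conditions. Writing $V_1$ for the real vector field generating the rotation, a direct computation gives $\lied_{V_1}\Omega_0=4i\,\Omega_0$, so the charge operator $q$ of $\Omega_0$ is pure imaginary as required. By Proposition \ref{su(3)cp3}, the form $\alpha_M={}^tZMdZ$ attached to the skew-symmetric invertible matrix $M$ is a holomorphic twist form of charge $2$ for this action, that is $\lied_{V_1}\alpha_M=2i\,\alpha_M$; hence $k_\alpha=\tfrac{q}{2}$ and the charge hypothesis of Theorem \ref{twistsimpletheo} holds.

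It then remains to match the forms produced by Theorem \ref{twistsimpletheo} with those of the statement. The only genuine bookkeeping is that Theorem \ref{twistsimpletheo} is phrased through the real generator $V_1$, whereas Theorem \ref{cp3} uses the holomorphic Euler field $\xi=\sum_j z_j\partial_{z_j}$. Since $\Omega_0$ is of type $(4,0)$, the antiholomorphic part of $V_1$ annihilates it, giving $\iota_{V_1}\Omega_0=i\,\iota_\xi\Omega_0$; substituting this (with $s=1$, so $i^s=i$) into the formula of Theorem \ref{twistsimpletheo} shows that its $\Omega$ equals $-\|\alpha_M\|^2\,\Omega_M$, while its $\omega$ is exactly $\omega_M$ because $\omega_0$ restricts to its pullback on $M$. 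Thus $\Omega_M$ and $\omega_M$ differ from the forms of Theorem \ref{twistsimpletheo} by a nonvanishing real scalar, so they are basic, descend to $\C\p^3$, and inherit the first $SU(3)$-equation $\Omega_M\wedge\omega_M=0$ verbatim.

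The point to watch is the normalization in the second $SU(3)$-equation. Feeding $\Omega_M=-\tfrac{1}{\|\alpha_M\|^2}\Omega$ into the conclusion of Theorem \ref{twistsimpletheo} yields $\Omega_M\wedge\overline{\Omega_M}=\tfrac{1}{\|\alpha_M\|^4}\,\Omega\wedge\overline{\Omega}=c_3\,\|V_1\|^2\,\omega_M^3$, so the factor $\|\alpha_M\|^4$ cancels precisely because of the normalizing denominator built into $\Omega_M$, and the only residual factor is $\|V_1\|^2=|Z|^2$. On the chosen level set $M=S^7$ this equals $1$, so $\Omega_M\wedge\overline{\Omega_M}=c_3\,\omega_M^3$ and $(\Omega_M,\omega_M)$ is a genuine $SU(3)$-structure on $\C\p^3$. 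I expect the delicate step to be exactly this constant tracking: confirming that the denominator $\|\alpha_M\|^2$ in the definition of $\Omega_M$ is the correct rescaling to absorb the $\|\alpha_M\|^4$ appearing in Theorem \ref{twistsimpletheo}, and that the leftover $\|V_1\|^2$ is constant on the unit level set.
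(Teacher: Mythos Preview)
Your proposal is correct and is exactly the argument the paper gives: after proving Proposition~\ref{su(3)cp3}, the paper simply states that Theorem~\ref{cp3} ``is then just an application of both Proposition~\ref{su(3)cp3} and Theorem~\ref{twistsimpletheo}.'' Your careful tracking of the relation $\iota_{V_1}\Omega_0=i\,\iota_\xi\Omega_0$, the resulting identity $\Omega=-\|\alpha_M\|^2\Omega_M$, and the cancellation of $\|\alpha_M\|^4$ together with $\|V_1\|^2\equiv 1$ on $S^7$ is more detail than the paper spells out, but it is precisely the bookkeeping implicit in that one-line reduction.
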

\indent The family given by Proposition \ref{su(3)cp3} suggests that we should apply Theorem \ref{multipletwisttheo} for higher dimensional projective spaces. More precisely, we want to use our family of twist forms of charge $2$ to provide to $\C\p^{4n-1}$ an $SU(4n-1)$-structure. To apply our result, we use a Gram-Schmidt process to produce an orthogonal family of twist forms out of our family. However, this produces zeros failing to satisfy the condition of nowhere-vanishing. Consequently, we can still apply Theorem \ref{twistsimpletheo} or \ref{multipletwisttheo} but in that case we will obtain smooth $SU(n)$-structures outside the zeros of our family. More precisely, we show the following result for $\C\p^7$:
\begin{introthm}\label{cp7}
    Let $\alpha_1,\alpha_2$ be two generic holomorphic twist forms of charge $2$ on $\C^8\backslash\{0\}$ with its canonical $SU(8)$-structure $(\Omega_0,\omega_0)$. Denote $\beta_1,\beta_2$ the resulting forms of the Gram-Schmidt process. Then, the following forms:
    \[\Omega:=\frac{1}{\|\beta_1\|^2\|\beta_2\|^2}\overline{\beta_1}\wedge\overline{\beta_2}\wedge(\overline{\beta_1}\cdot\overline{\beta_2}\cdot\iota_\xi\Omega_0),\,-\omega=\omega_0-\frac{i}{\|\beta_1\|^2}\beta_1\wedge\overline{\beta_1}-\frac{1}{\|\beta_2\|^2}\beta_2\wedge\overline{\beta_2},\] define a smooth $SU(7)$-structure on $\C\p^7$ outside four generic projective lines in $\C\p^7$. 
\end{introthm}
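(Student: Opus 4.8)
The plan is to realise $\C\p^7$ as the symplectic quotient of $\C^8$ by the diagonal $S^1$-action, so that $s=1$, $n=7$ and $n+s=8$, and then to apply Theorem \ref{multipletwisttheo} with $l=2$ twist forms. The single induced vector field is the holomorphic Euler field $\xi$, which is why $\iota_{V_1}\Omega_0$ becomes $\iota_\xi\Omega_0$ in the stated formulas. Everything thus reduces to checking that $\beta_1,\beta_2$ form an orthogonal family of twist forms of the correct total charge on the complement of the claimed exceptional set, and to identifying that exceptional set.

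First I would settle the charge bookkeeping. Since $\alpha_1,\alpha_2$ are holomorphic twist forms of charge $2$ on $\C^8\setminus\{0\}$, the converse part of Proposition \ref{su(3)cp3} gives invertible skew-symmetric matrices $M_1,M_2$ with $\alpha_k={}^tZM_kdZ$. The holomorphic volume form $\Omega_0=dz_1\wedge\cdots\wedge dz_8$ has charge $q$ equal to twice the charge $2$ of a single $\alpha_k$, so $k_{\alpha_1}+k_{\alpha_2}=\frac{q}{2}$ and the charge hypothesis of Theorem \ref{multipletwisttheo} holds. Next, the Gram--Schmidt step sets $\beta_1=\alpha_1$ and $\beta_2=\alpha_2-\frac{\langle\alpha_2,\alpha_1\rangle}{\|\alpha_1\|^2}\alpha_1$; it preserves the twist-form conditions wherever $\beta_2\neq0$: both $\beta_i$ are manifestly horizontal $(1,0)$-forms, and the scalar $\frac{\langle\alpha_2,\alpha_1\rangle}{\|\alpha_1\|^2}$ is $S^1$-invariant precisely because $\alpha_1,\alpha_2$ share the same charge, so the two opposite phases cancel in the Hermitian pairing; hence $\lied_\xi\beta_2=k_{\alpha_2}\beta_2$ and $\beta_2$ is charge definite of the same charge as $\alpha_2$. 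Orthogonality $\langle\beta_1,\beta_2\rangle=0$ is automatic, so Definition \ref{orthogonaldef} is met and Theorem \ref{multipletwisttheo} applies off the zero set of $\beta_2$.

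It remains to interpret the sign and to locate the zeros. With $l=2$ the factor $(-1)^{l+1}=-1$ appears in \eqref{eq2theo2}; since $n=7$ is odd we have $(-\omega)^7=-\omega^7$, so replacing $\omega$ by $-\omega$ --- as in the statement --- converts the right-hand side into $c_7(\cdots)(-\omega)^7$ with the correct positive normalisation, while $\Omega\wedge(-\omega)=0$ is unaffected, yielding a genuine $SU(7)$-structure. For the zero locus, $\beta_2(Z)=0$ means $\alpha_2|_Z$ is proportional to $\alpha_1|_Z$; writing each covector as $\alpha_k|_Z\sim M_kZ$ and using that $M_1$ is invertible (so $M_1Z\neq0$ for $Z\neq0$), this is equivalent to $(M_2-\lambda M_1)Z=0$ for some $\lambda\in\C$, i.e.\ $Z\in\ker(M_2-\lambda M_1)$. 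The Pfaffian of the skew pencil $M_2-\lambda M_1$ is a degree-$4$ polynomial in $\lambda$; for generic $\alpha_1,\alpha_2$ it has four distinct roots, at each of which the rank of the skew matrix drops by exactly two, giving a two-dimensional kernel and hence a projective line in $\C\p^7$. This produces exactly the four generic projective lines outside which $\|\beta_2\|>0$ and the construction is smooth.

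The main obstacle I anticipate is this final zero-locus analysis: establishing that the degree-$4$ Pfaffian of the pencil has, generically, four distinct roots each with a strictly two-dimensional kernel (so that one obtains four distinct copies of $\C\p^1$ rather than higher-dimensional or coincident loci), and confirming that $\beta_2$ vanishes on precisely these lines and nowhere else. The charge-invariance of the Gram--Schmidt coefficient is the other point that must be verified with care, since it is what guarantees that the orthogonalised family still satisfies Definition \ref{orthogonaldef}.
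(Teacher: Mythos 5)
Your proposal is correct and follows essentially the same route as the paper: apply Theorem \ref{multipletwisttheo} with $l=2$ for the diagonal $S^1$-action, use the Gram--Schmidt lemma (the coefficient $\frac{\overline{\alpha_1}\cdot\alpha_2}{\|\alpha_1\|^2}$ is invariant, so $\beta_2$ stays horizontal and charge definite of charge $2$), and identify the bad locus as the set where $\alpha_1,\alpha_2$ are proportional --- your Pfaffian-pencil argument for $\ker(M_2-\lambda M_1)$ being generically four two-dimensional kernels is just a cleaner justification of the paper's claim that $M_1^{-1}M_2$ generically has four distinct eigenvalues of multiplicity two, each plane projecting to a line in $\C\p^7$. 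One small slip: on $\C^8$ the charge of $\Omega_0$ is $q=8$ (each $dz_j$ contributes $1$), not twice the charge of a single $\alpha_k$; the hypothesis of Theorem \ref{multipletwisttheo} still holds since $k_{\alpha_1}+k_{\alpha_2}=2+2=4=\frac{q}{2}$, so nothing in your argument breaks.
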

In this example, note that the orthogonal twist forms are not holomorphic. Moreover, the singularities are studied and cannot be extended continuously. 
\indent We then review the example of toric $\C\p^1$ bundles over Hirzebruch surfaces $\F_n$ for $n\in\Z$ in \cite{Larfors_2010} as it is another source of examples where the twist forms we compute are not holomorphic. Hirzebruch surfaces are toric surfaces described by a twisted action of $\C^*\times\C^*$ on $\C^2\backslash\{0\}\times\C^2\backslash\{0\}$ given by : \[(\lambda,\mu)\cdot(z_0,z_1)\times(z_0',z_1')=(\lambda z_0,\lambda z_1,\lambda^n\mu z'_0,\mu z'_1)\] In particular, $\F_0=\C\p^1\times\C\p^1$ and the quotient description with respective charges $(1,1,n,0)$ and $(0,0,1,1)$ are used to describe a toric $\C\p^1$ bundle over $\F_n$. Such a bundle is specified by the charges of the action of $\C^*\times\C^*\times\C^*$ on $\C^2\backslash\{0\}\times\C^2\backslash\{0\}\times\C^2\backslash\{0\}$ which are $q^1=(1,1,n,0,0,a), q^2=(0,0,1,1,0,b)$ and $q^3=(0,0,0,0,1,1)$ where $a,b$ are integers. We compute a non-holomorphic twist form in the case where $a=2-n$ and $b=-2$ to apply Theorem \ref{twistsimpletheo}. 
\begin{introprop}
    Let $n$ be an integer and $a=2-n$. Let $X$ be a toric $\C\p^1$ bundle over $\F_n$ with charges:
    \[q^1:=(1,1,n,0,0,a),\, q^2:=(0,0,1,1,0,-2),\, q^3:=(0,0,0,0,1,1).\] Consider $\Omega_0$ the standard holomorphic volume form on $\C^6$ which is of charge $q$ with respect to this action of $T^3$. Then, the form: \[\alpha=z_5(-z_2dz_1+z_1dz_2)+z_6\bigg(-z_4dz_3+z_3dz_4+\frac{nz_3z_4}{|z_1|^2+|z_2|^2}(\overline{z_1}dz_1+\overline{z_2}dz_2)\bigg)\] is a twist form of charge $\frac{q}{2}$. 
\end{introprop}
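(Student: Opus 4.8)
The plan is to verify directly the three defining conditions of a twist form (Definition \ref{twistformdef}) for $\alpha$, after recording the infinitesimal data of the action. Writing the $T^3$-action through its charges, the vector field induced by the $k$-th generator $e_k$ is $V_k = i\sum_{j} q^k_j\,(z_j\partial_{z_j} - \bar z_j \partial_{\bar z_j})$, so that $\iota_{V_k}dz_j = iq^k_j z_j$ and $\lied_{V_k}dz_j = iq^k_j\,dz_j$. Applying the latter to $\Omega_0 = dz_1\wedge\cdots\wedge dz_6$ gives $q(e_k) = i\sum_j q^k_j$; substituting $a = 2-n$ yields $q = (4i,0,2i)$ on the basis $(e_1,e_2,e_3)$, so the target charge is $\tfrac{q}{2} = (2i,0,i)$. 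Throughout I would use the structural fact that $r^2 := |z_1|^2+|z_2|^2$ is $T^3$-invariant, since $q^k_1 = q^k_2$ for every $k$; hence $V_k(r^{-2})=0$ and each coefficient of $\alpha$ may be treated as a sum of weight-homogeneous monomials.

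Writing $\alpha = \sum_j f_j\,dz_j$ with $f_5=f_6=0$, $f_3 = -z_4z_6$, $f_4 = z_3z_6$, and $f_1,f_2$ the coefficients carrying the correction term $\tfrac{nz_3z_4z_6}{r^2}\bar z_j$, the type $(1,0)$ condition is immediate. For nowhere-vanishing I would argue on $(\C^2\setminus\{0\})^3$, the chart underlying the quotient: when $z_6\neq 0$ the coefficients $-z_4z_6,\,z_3z_6$ of $dz_3,dz_4$ cannot both vanish because $(z_3,z_4)\neq 0$; when $z_6=0$ one has $\alpha = z_5(-z_2dz_1+z_1dz_2)$ with $z_5\neq 0$ and $(z_1,z_2)\neq 0$, so again $\alpha\neq 0$.

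Horizontality $\alpha(V_k) = i\sum_j q^k_j z_j f_j = 0$ is a short computation for each $k$. The cases $k=2,3$ reduce at once to $z_3f_3+z_4f_4 = 0$ and to $f_5=f_6=0$. The case $k=1$ is the crux, and the only place the correction term intervenes: $z_1f_1+z_2f_2$ collapses to $nz_3z_4z_6$ precisely because $r^{-2}(|z_1|^2+|z_2|^2)=1$, and this cancels $nz_3f_3 = -nz_3z_4z_6$.

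Finally, for charge definiteness I would compute $\lied_{V_k}\alpha = \sum_j\bigl(V_k(f_j)+iq^k_j f_j\bigr)dz_j$ and check that each nonzero $f_j$ is weight-homogeneous with $V_k(f_j)/f_j + iq^k_j$ independent of $j$ and equal to $\tfrac{q}{2}(e_k)$. Both monomials of $f_1,f_2$ carry the weight $(i,0,i)$, so their $dz_1,dz_2$ contributions give $(2i,0,i)$; and $f_3,f_4$ produce the same $(2i,0,i)$ — here the substitution $a=2-n$ is essential, being exactly the relation that makes the $V_1$-weights of $z_4z_6$ and $z_3z_6$ combine with the $\lied_{V_1}dz_3,\lied_{V_1}dz_4$ shifts to yield $2i$. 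I expect the only real obstacle to be the bookkeeping of these weights; the identities are forced by the two structural facts that $r^2$ is invariant and that $a=2-n$ aligns the four nonzero component-charges onto the single linear form $\tfrac{q}{2}$, after which the verification is mechanical. These same relations also explain why $\alpha$ is not holomorphic, the correction term being the unique horizontal completion compatible with the prescribed charge.
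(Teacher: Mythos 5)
Your proof is correct and follows essentially the same route as the paper: a direct verification of the twist-form conditions, whose key points are that the choices $a=2-n$, $b=-2$ align the charges of $z_5(-z_2dz_1+z_1dz_2)$ and of the $z_6$-term onto $\frac{q}{2}=(2i,0,i)$, and that the non-holomorphic correction term $\frac{nz_3z_4}{|z_1|^2+|z_2|^2}(\overline{z_1}dz_1+\overline{z_2}dz_2)$ is exactly what restores horizontality with respect to $V_1$. The only difference is organizational: the paper factors $\alpha=z_5\alpha_1+z_6\alpha_2$ and tabulates the charges $k_{\alpha_1},k_{\alpha_2}$ of the two pieces, whereas you expand $\alpha=\sum_j f_j\,dz_j$ and track monomial weights, additionally spelling out the nowhere-vanishing check on $(\C^2\setminus\{0\})^3$, which the paper leaves implicit.
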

We then give another source of examples based on Calabi-Yau cones. It is well-known that if $Y$ is a compact Fano Kähler-Einstein manifold, then its cone $K_Y\backslash Y$ admits a Calabi-Yau structure \cite{sparks2010sasakieinsteinmanifolds}. If $Y$ is further assumed to have a Fano index multiple of $4$, then we can write:\[-K_Y=L^{\otimes 4},\] where $L$ is a holomorphic line bundle that we suppose very ample. Using Kodaira's theorem \cite{Wells1980}, we obtain a holomorphic $S^1$ equivariant embedding from $L^{-1}\backslash Y$ to a certain $\C^N\backslash\{0\}$. This embedding allows us to pullback the twist forms given by Proposition \ref{su(3)cp3} and to have the following result:
\begin{introthm}\label{cones}
    Let $(Y^n,J,\omega)$ be a compact Fano Kähler-Einstein manifold. If the Fano index of $Y$ is a multiple of $4$ and assuming $L^{\otimes 4}=-K_Y$ with $L$ very ample, then every non-trivial smooth section $\alpha$ of $(T^{1,0}Y)^*\otimes L^{\otimes 2}$ produces a smooth $SU(n)$-structure on the complement set of the zeros of $\alpha$ in $Y$.
\end{introthm}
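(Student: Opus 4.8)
The plan is to realize Theorem \ref{cones} as an instance of Theorem \ref{twistsimpletheo} for the rank-one torus $T^1=S^1$, applied to a Calabi--Yau cone sitting over $Y$. Let $X$ be the total space of $L^{-1}$ with its zero section removed, equivalently the smooth locus of the affine cone over $Y$ for the Kodaira embedding associated to $L$. It carries the fiberwise action $u\mapsto e^{i\theta}u$, which is holomorphic, free (the zero section is deleted) and Hamiltonian for the cone symplectic form; a regular level $M$ of its moment map is the unit circle bundle and $M/\!\!/S^1=Y$. First I would equip $X$ with a Kähler $SU(n+1)$-structure $(\Omega_X,\omega_X)$: since $Y$ is Fano Kähler--Einstein, the canonical cone $K_Y\setminus Y$ admits a Ricci-flat Kähler cone metric \cite{sparks2010sasakieinsteinmanifolds}, and because $K_Y=(L^{-1})^{\otimes 4}$ the fiberwise map $v\mapsto v^{\otimes 4}$ realizes $X=L^{-1}\setminus Y$ as a fiberwise $4{:}1$ covering of $K_Y\setminus Y$; pulling the Calabi--Yau data back along this local biholomorphic isometry produces the desired $SU(n+1)$-structure on $X$, for which $\Omega_X$ is automatically charge definite with $q\in i\R$ by \cite{Martelli_2008}.

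The next step is to compute the value of the charge $q$ of $\Omega_X$ for the fiberwise action, since this is what pins down the relevant twist bundle. In a chart $V\subset Y$ with holomorphic coordinates $y_1,\dots,y_n$ and a trivialization of $L^{-1}|_V$ with fiber coordinate $u$, the tautological Calabi--Yau form on $K_Y\setminus Y$ pulls back to $\Omega_X=4c(y)\,u^3\,du\wedge dy_1\wedge\cdots\wedge dy_n$ for a nowhere-vanishing holomorphic $c$, which is homogeneous of degree $4$ in $u$; hence $\lied_{V_1}\Omega_X=4i\,\Omega_X$, so $q=4i$ is purely imaginary as required. Consequently $k_\alpha=q/2=2i$ corresponds to fiber-homogeneity of degree $2$, and the hypothesis that the Fano index be divisible by $4$ is exactly what makes this half-charge integral.

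It then remains to match twist forms of charge $q/2$ on $X$ with sections of $(T^{1,0}Y)^*\otimes L^{\otimes 2}$. A twist form $\alpha_X$ in the sense of Definition \ref{twistformdef} is a nowhere-vanishing $(1,0)$-form that is horizontal, i.e. has no $du$-component, and homogeneous of degree $2$ in $u$. Writing $\alpha_X=\sum_j f_j(y,u)\,dy_j$ with each $f_j$ of fiber-degree $2$, the coefficients $dy_j$ assemble into a $(1,0)$-form on $Y$ while fiber-degree-$2$ functions on $L^{-1}$ are precisely the sections of $L^{\otimes 2}$; thus horizontal degree-$2$ $(1,0)$-forms on $X$ are in bijection with smooth sections of $(T^{1,0}Y)^*\otimes L^{\otimes 2}$, the nowhere-vanishing ones corresponding to nowhere-vanishing sections. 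Given a nontrivial smooth section $\alpha$, the associated form $\alpha_X$ is therefore a genuine twist form of charge $q/2$ over the $S^1$-invariant open set $X\setminus\pi^{-1}(Z(\alpha))$, where $Z(\alpha)$ denotes the zero locus.

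Finally I would apply Theorem \ref{twistsimpletheo} with $s=1$ over $Y\setminus Z(\alpha)$: all its hypotheses hold there ($X$ Kähler $SU(n+1)$, purely imaginary charge $q=4i$, free Hamiltonian holomorphic $S^1$-action, and a twist form of charge $q/2$), and since $\|\alpha_M\|^2$ and $\|V_1\|^2$ are nonzero there the normalization factor does not degenerate, so the forms $\Omega,\omega$ it produces descend to $Y\setminus Z(\alpha)$ and satisfy the $SU(n)$-equations, yielding the claimed smooth $SU(n)$-structure on the complement of the zeros of $\alpha$. I expect the main obstacle to be the construction of the $SU(n+1)$-structure on $X$ together with the exact computation of its charge $q=4i$, as it is this value---and hence the divisibility of the index by $4$---that forces the twist bundle to be $L^{\otimes 2}$; the explicit holomorphic instances then arise by pulling the forms $\alpha_M={}^tZMdZ$ of Proposition \ref{su(3)cp3} back along the $S^1$-equivariant Kodaira embedding $X\hookrightarrow\C^N\setminus\{0\}$, these being horizontal since ${}^tZMZ=0$ by skew-symmetry of $M$.
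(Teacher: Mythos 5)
Your proposal is correct, and its scaffolding is exactly the paper's: take $X=L^{-1}\backslash Y$ with the fiberwise $S^1$-action, pull the Calabi--Yau structure of $K_Y\backslash Y$ back through the fiberwise $4{:}1$ covering $z\mapsto z^{\otimes 4}$ (so $\Omega_X$ has charge $4$, purely imaginary), and feed this into Theorem \ref{twistsimpletheo} with $s=1$. Where you genuinely diverge is in how the twist form is produced. The paper's written proof takes the $S^1$-equivariant Kodaira embedding $\Phi : L^{-1}\backslash Y\hookrightarrow\C^N\backslash\{0\}$ as its main device, pulls back the holomorphic charge-$2$ twist forms classified in Proposition \ref{su(3)cp3}, and only asserts in its final sentence that ``this also works by taking any smooth section of $(T^{1,0}Y)^*\otimes L^{\otimes 2}$'' --- which is the actual statement of the theorem. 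You do the opposite: you prove that last sentence head-on, by identifying horizontal $(1,0)$-forms on $X$ of charge $q/2$ (i.e.\ fiber-degree $2$) with smooth sections of $(T^{1,0}Y)^*\otimes L^{\otimes 2}$, and you relegate the Kodaira embedding to the production of explicit holomorphic examples. Your route is arguably the more faithful proof of the theorem as stated, since the paper's displayed argument strictly covers only the special sections arising as $\Phi^*\alpha_M$; the paper's route, in exchange, ties the construction to the classification of Proposition \ref{su(3)cp3} and to the $\C\p^{4n-1}$ application. One small caution: your claimed \emph{bijection} between charge-$2i$ horizontal forms and sections is an overstatement --- a smooth function on the fibers with $S^1$-weight $2$ need not be fiberwise polynomial of degree $2$ (e.g.\ $|u|^2u^2$ has weight $2$) --- but this does not affect your proof, since only the direction you actually use (every smooth section yields a twist form of charge $q/2$ away from its zeros) is needed.
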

As an application of this theorem, we give another proof showing how to endow $\C\p^{4n-1}$ with an $SU(4n-1)$-structure.\\
\indent Eventually, we explicitly compute the torsion of the family of $SU(3)$-structures that we have constructed on $\C\p^3$ and we show that some of them are LT, see \cite{Larfors_2013,Larfors_2010}. More precisely, Theorem \ref{cp3} gives a family of holomorphic twist forms parametrized by skew-symmetric invertible $4\times 4$ complex matrices. For such a matrix $M$, $\alpha_M$ is the corresponding holomorphic twist form and $(\Omega_M,\omega_M)$ is the constructed $SU(3)$-structure on $\C\p^3$. The differential of $\alpha_M$ satisfies the following decomposition: \[d\alpha_M=\lambda_M\alpha_M\cdot\Omega_M+\alpha_M\wedge\eta_M,\] 
where $\lambda_M$ is a complex function and $\eta_M$ a $(1,0)$-form. The torsion of $(\Omega_M,\omega_M)$ is then: 
\begin{introthm}\label{torsion}
    Let $M$ be a skew-symmetric invertible $4\times 4$ complex matrix and $\alpha_M$ the corresponding holomorphic twist form for the $SU(3)$-structure $(\Omega_M,\omega_M)$ on $\C\p^3$. Denote by $\lambda_M$ the smooth complex function and $\eta_M$ the $(1,0)$-form given by: \[d\alpha_M=\lambda_M\alpha_M\cdot\Omega_M+\alpha_M\wedge\eta_M.\] The torsion classes are:
    \[W_1=\frac{4}{3}\overline{\lambda_M},\]
    \begin{align*}W_2\wedge\omega_M=&\frac{W_1}{2}(\omega_M+\frac{3i}{2\|\alpha_M\|^2}\alpha_M\wedge\overline{\alpha_M})\wedge\omega_M+\frac{i\overline{\lambda_M}}{\|\alpha_M\|^2}\alpha_M\wedge\overline{\alpha_M}\wedge\omega_0+\\&\frac{1}{\lambda_M\|\alpha_M\|^2}\alpha_M\wedge\overline{\alpha_M}\wedge d\eta_M,\end{align*} 
    \[W_3=\Ree(\eta_M)\wedge(\omega_M+\frac{i}{\|\alpha_M\|^2}\alpha_M\wedge\overline{\alpha_M}),\, W_4=-\Ree(\eta_M),\, W_5=2\Ree(\eta_M).\] Moreover, the structure is LT, i.e $W_3=W_4=W_5=0$ iff there exists $\mu\in\R$ such that \[M^*M=\mu I_4,\] and in this case \[W_2=\frac{W_1}{2}(\omega_M+\frac{3i}{2\|\alpha_M\|^2}\alpha_M\wedge\overline{\alpha_M}).\] 
\end{introthm}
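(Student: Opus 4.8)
The plan is to compute $d\omega_M$ and $d\Omega_M$ and then read off the five torsion classes by projecting onto the irreducible $SU(3)$-summands, using the Chiossi--Salamon normal form
\[d\omega_M=\frac{3}{2}\,\Ima(\overline{W_1}\,\Omega_M)+W_4\wedge\omega_M+W_3,\qquad d\Omega_M=W_1\,\omega_M\wedge\omega_M+W_2\wedge\omega_M+\overline{W_5}\wedge\Omega_M,\]
where $W_1$ is a complex scalar, $W_2$ a primitive $(1,1)$-form, $W_3$ a real primitive $(2,1)+(1,2)$-form, and $W_4,W_5$ real $1$-forms. The structural inputs are the decomposition $d\alpha_M=\lambda_M\,\alpha_M\cdot\Omega_M+\alpha_M\wedge\eta_M$ and its conjugate $d\overline{\alpha_M}=\overline{\lambda_M}\,\overline{\alpha_M}\cdot\overline{\Omega_M}+\overline{\alpha_M}\wedge\overline{\eta_M}$; the fact that $\overline{\alpha_M}$ is a wedge-factor of $\Omega_M$, so that $\overline{\alpha_M}$ is of type $(1,0)$ for the induced structure and $\overline{\alpha_M}\wedge\Omega_M=0$ while $\alpha_M\wedge\Omega_M\neq 0$; the relation $\omega_0=\omega_M+\frac{i}{\|\alpha_M\|^2}\alpha_M\wedge\overline{\alpha_M}$ with $d\omega_0=0$ (the reduced Fubini--Study form); and the Euler identity $d(\iota_\xi\Omega_0)=\lied_\xi\Omega_0$, a constant multiple of $\Omega_0$. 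A preliminary lemma, extracted from the structure equation, expresses $\Ree(\eta_M)$ as a nonzero constant multiple of $d\log\|\alpha_M\|^2$; this identity is the technical key that collapses both derivatives to closed forms.

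First I would differentiate $\omega_M=\omega_0-\frac{i}{\|\alpha_M\|^2}\alpha_M\wedge\overline{\alpha_M}$. Since $d\omega_0=0$, one substitutes the structure equation and its conjugate into $d(\|\alpha_M\|^{-2})\wedge\alpha_M\wedge\overline{\alpha_M}$ and into $\|\alpha_M\|^{-2}(d\alpha_M\wedge\overline{\alpha_M}-\alpha_M\wedge d\overline{\alpha_M})$. The class $W_4$ is isolated from $d\omega_M\wedge\omega_M=W_4\wedge\omega_M^2$ (valid because $\Omega_M\wedge\omega_M=0$ and $W_3\wedge\omega_M=0$), giving $W_4=-\Ree(\eta_M)$ via the norm lemma. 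The scalar $W_1$ comes from $\Omega_M\wedge d\omega_M=W_1\,\omega_M^3$: every term carrying $\overline{\alpha_M}$ as a free factor dies against $\Omega_M\wedge\overline{\alpha_M}=0$, and the only survivor is the conjugate term $\overline{\lambda_M}\,\overline{\alpha_M}\cdot\overline{\Omega_M}$ appearing in $\Omega_M\wedge\alpha_M\wedge d\overline{\alpha_M}$, yielding $W_1=\frac{4}{3}\overline{\lambda_M}$. Subtracting the $W_4\wedge\omega_M$-part and the $\Ima(\overline{W_1}\Omega_M)$-part leaves the primitive remainder $W_3=\Ree(\eta_M)\wedge\omega_0$.

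Next I would compute $d\Omega_M$ from $\Omega_M=\frac{1}{\|\alpha_M\|^2}\overline{\alpha_M}\wedge(\overline{\alpha_M}\cdot\iota_\xi\Omega_0)$, bringing in the conjugate structure equation, the Euler identity and $d(\|\alpha_M\|^{-2})$. The component of $d\Omega_M$ divisible by $\Omega_M$ fixes $W_5=2\Ree(\eta_M)$ through the norm lemma, the recomputation of the $\omega_M^2$-component reproduces $W_1=\frac{4}{3}\overline{\lambda_M}$ as a consistency check, and collecting the remaining primitive $(2,2)$-part gives the stated formula for $W_2\wedge\omega_M$. The main obstacle is exactly this computation of $d\Omega_M$: because the induced structure is non-Kähler and the induced almost complex structure is non-integrable, $d$ does not commute with the Hermitian contraction defining $\Omega_M$ and $d\Omega_M$ acquires a genuine $(2,2)$-Nijenhuis component; keeping track of this and of the descent to $\C\p^3$ is where the bookkeeping is heaviest, and it is the norm lemma that makes the dependence on $d\|\alpha_M\|^2$ collapse.

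Finally, LT means by definition $W_3=W_4=W_5=0$. Since $W_4=-\Ree(\eta_M)$, $W_5=2\Ree(\eta_M)$ and $W_3=\Ree(\eta_M)\wedge\omega_0$, all three vanish simultaneously exactly when $\Ree(\eta_M)=0$, equivalently $d\|\alpha_M\|^2=0$ by the norm lemma. For $\alpha_M={}^tZMdZ$ one computes $\|\alpha_M\|^2=Z^*M^*MZ$, whose descent to $\C\p^3$ is the Rayleigh quotient of the Hermitian matrix $M^*M$; this is constant precisely when $M^*M$ has a single eigenvalue, i.e. $M^*M=\mu I_4$ for some $\mu\in\R$. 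Substituting $\Ree(\eta_M)=0$ into the general expression for $W_2\wedge\omega_M$ kills every term involving $\Ree(\eta_M)$ and $d\eta_M$, leaving $W_2\wedge\omega_M=\frac{W_1}{2}\big(\omega_M+\frac{3i}{2\|\alpha_M\|^2}\alpha_M\wedge\overline{\alpha_M}\big)\wedge\omega_M$, from which the stated $W_2$ follows since the bracketed factor is primitive.
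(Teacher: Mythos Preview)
Your overall strategy matches the paper's: compute $d\omega_M$ and $d\Omega_M$ directly from the decomposition $d\alpha_M=\lambda_M\hat\Omega_M+\alpha_M\wedge\eta_M$, use the norm identity $-\,d\log\|\alpha_M\|^2=\Ree(\eta_M)$, and read off the torsion classes. The paper obtains $d\hat\Omega_M$ by differentiating the structure equation (using $d\lambda_M=\lambda_M\Ree(\eta_M)$, since $\lambda_M=2\,\mathrm{Pf}(M)/\|\alpha_M\|^2$) rather than via the Euler identity you invoke, but both routes lead to the same place.

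There is, however, a genuine gap in your LT step. You write that setting $\Ree(\eta_M)=0$ ``kills every term involving $\Ree(\eta_M)$ and $d\eta_M$'' in the general expression for $W_2\wedge\omega_M$. That expression has three terms, and none of them contains $\Ree(\eta_M)$ explicitly; in particular the last term $\frac{1}{\lambda_M\|\alpha_M\|^2}\alpha_M\wedge\overline{\alpha_M}\wedge d\eta_M$ does not vanish just because $\Ree(\eta_M)=0$. What actually happens is that when $M^*M=\mu I_4$ one computes $d\eta_M=-i|\lambda_M|^2\omega_0$ (this is the extra calculation the paper does), and then
\[
\frac{1}{\lambda_M\|\alpha_M\|^2}\alpha_M\wedge\overline{\alpha_M}\wedge d\eta_M
=-\,\frac{i\overline{\lambda_M}}{\|\alpha_M\|^2}\alpha_M\wedge\overline{\alpha_M}\wedge\omega_0,
\]
so the second and third terms of $W_2\wedge\omega_M$ cancel, leaving only the first. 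Without this explicit evaluation of $d\eta_M$ in the LT case your argument does not close, and you should add it.
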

The paper is organized as follows: \\
\indent In Section $2$, we introduce the notations that we will be using throughout this paper and we first prove Theorem \ref{twistsimpletheo} in Section $2.1$. We then use the different computations of the proof of Theorem \ref{twistsimpletheo} to prove Theorem \ref{multipletwisttheo} by induction in Section $2.2$.\\
\indent Section $3$ is dedicated to the different geometric applications. In Section $3.1$, we study the existence of holomorphic twist forms in $\C^N$ for the diagonal circle action where we prove Propositions \ref{su(3)cp3} and \ref{nonexistence}. In the next section, we detail the construction of $SU(4n-1)$-structures on $\C\p^{4n-1}$ with a particular attention to the case of $\C\p^3$. In Section $3.2$, we study the example of toric $\C\p^1$ bundles over the Hirzebruch surfaces and detail the computations of the twist form. Section $3.3$ is dedicated to Calabi-Yau cones where we prove Theorem \ref{cones} and review the example of the projective spaces in this setting.\\
\indent In Section $4$, we recall what the torsion classes of an $SU(3)$-structure are and some basic facts about the geometry each of them encode. We then proceed to compute these classes for the $SU(3)$-structures on $\C\p^3$ we have found and discuss the type of geometry we can get with this construction. \\ \\
\textbf{Acknowledgments.} I would like to thank my supervisors Eveline Legendre and Dimitrios Tsimpis for all our different discussions that led to the results of this article. I also would like to thank Daniele Faenzi for his help and interest in some of the questions of this work, Udhav Fowdar for our discussions about $G$-structures and Simon Salamon for his interest in this work.  I am partially supported by the grant BRIDGES ANR-FAPESP ANR-21-CE40- 0017.
\section{A twist reduction}
\subsection{An abstract version of the reduction process of Larfors,Tsimpis and Lüst}
Let $(X,h_X,J_X,\omega_X,\Omega_X)$ be a Kähler manifold of complex dimension $n+s$ with an $SU(n+s)$-structure given by $(\Omega_X,\omega_X)$. In particular, $\Omega_X$ is a nowhere-vanishing complex form of type $(n,0)$ and $\omega_X$ is a Kähler form of type $(1,1)$ and satisfy the equations (\ref{SU(n)eq1}) and (\ref{SU(n)eq2}). \\ \indent Consider a free, Hamiltonian and holomorphic action of a compact torus $T^s$ on $X$ with moment map $\mu : X\to\tor^*$ where $\tor :=Lie(T^s)$. For $a\in\tor$, we let $V_a$ to be the vector field induced on $X$ by the action: \[V_a(p):=\frac{d}{dt}\bigg|_{t=0}\exp(ta)\cdot p.\]
The Hamiltonian condition reads as $\mu$ being invariant by the action and \[\forall a\in\tor, \iota_{V_a}\omega_X=-d\langle\mu,a\rangle.\]
\indent Let $c$ be a regular value of $\mu$ and $M=\mu^{-1}(c)$ the corresponding level set. We denote by $Y=M/\!\!/T^s$ the quotient manifold which is a manifold of real dimension $2n$. For any differential form $\Phi_X$ on $X$, we write $\Phi_M=i^*\Phi_X$ for the induced form on $M$ where $i : M\hookrightarrow X$ is the inclusion map. A form $\Phi_X$ descends to the quotient $Y$ if there exists a form $\Phi_Y$ on $Y$ such that $\Phi_M=\pi^*\Phi_Y$ with $\pi : M\to Y$ the projection map. Recall that $\Phi_X$ descends to the quotient $Y$ iff $\Phi_M$ is basic on $M$ for the $T^s$-action , i.e \[\forall a\in\tor,\iota_{V_a}\Phi_M=\lied_{V_a}\Phi_M=0,\] where $V_a$ is a vector field on $M$ when restricted to it. Fixing a basis $e_1,\cdots,e_s$ of $\tor$ and setting $V_k=V_{e_k}$ the corresponding vector fields, the property of being basic then needs just to be checked on the $V_k$.\\
\indent The strategy in \cite{Larfors_2010}, which we generalize here, consists in manipulating algebraically the forms $\Omega_X,\omega_X$ to get basic forms on $M$ which satisfy the same kind of equations as (\ref{SU(n)eq1}) and (\ref{SU(n)eq2}). We note:
\begin{lemma}
    The Kähler form $\omega_X$ descends to the quotient $Y$.
\end{lemma}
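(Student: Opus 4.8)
The plan is to show that $\omega_X$ descends to $Y$ by verifying that its pullback $\omega_M = i^*\omega_X$ is basic for the $T^s$-action, i.e., that for every $a \in \tor$ (equivalently, for each basis vector field $V_k$), both $\iota_{V_k}\omega_M = 0$ and $\lied_{V_k}\omega_M = 0$ hold on $M$.

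**First I would** handle the horizontality condition $\iota_{V_a}\omega_M = 0$. This is precisely where the moment map hypothesis enters. The Hamiltonian condition gives $\iota_{V_a}\omega_X = -d\langle\mu, a\rangle$ on $X$. Pulling back to $M = \mu^{-1}(c)$ via $i$, I would use that $i^*(\iota_{V_a}\omega_X) = \iota_{V_a}(i^*\omega_X) = \iota_{V_a}\omega_M$, since $V_a$ is tangent to $M$ (the action preserves the level set $M$ because $\mu$ is $T^s$-invariant). On the other hand, $i^*\big(d\langle\mu,a\rangle\big) = d\big(i^*\langle\mu,a\rangle\big) = d\big(\langle c, a\rangle\big) = 0$, because $\mu$ is constant equal to $c$ along $M$. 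Combining these gives $\iota_{V_a}\omega_M = 0$, so $\omega_M$ is horizontal.

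**Next I would** address invariance $\lied_{V_a}\omega_M = 0$. The cleanest route is Cartan's magic formula, $\lied_{V_a}\omega_M = d\,\iota_{V_a}\omega_M + \iota_{V_a}\,d\omega_M$. The first term vanishes by the horizontality just established. For the second term, since $X$ is Kähler, $\omega_X$ is closed, hence $d\omega_M = i^*(d\omega_X) = 0$, so the second term vanishes as well. Alternatively, $T^s$-invariance of $\omega_X$ follows directly from the fact that the action is holomorphic and symplectic (it is a Hamiltonian action preserving $\omega_X$), so $\lied_{V_a}\omega_X = 0$ on $X$, and this invariance passes to $\omega_M$ under pullback. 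Either argument shows $\lied_{V_a}\omega_M = 0$.

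**The main point to be careful about** — though it is not a deep obstacle — is the interplay between pullback and contraction: one must check that $V_a$ genuinely restricts to a vector field tangent to $M$, which is exactly guaranteed by the $T^s$-invariance of $\mu$, so that $\iota_{V_a}$ commutes with the pullback $i^*$ in the relevant sense. Once horizontality and invariance are both verified on the basis vector fields $V_k$, the form $\omega_M$ is basic, and therefore descends to a form $\omega_Y$ on $Y$ with $\omega_M = \pi^*\omega_Y$, completing the proof.
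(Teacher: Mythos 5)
Your proof is correct. The paper disposes of this lemma in one line by citing the symplectic reduction theorem (Marsden--Weinstein), whereas you prove the relevant statement from scratch: you verify directly that $\omega_M = i^*\omega_X$ is basic, which is precisely the key step hidden inside that theorem. Your two verifications are sound: horizontality follows from pulling back the moment map identity $\iota_{V_a}\omega_X = -d\langle\mu,a\rangle$ to the level set, where $\langle\mu,a\rangle$ is constant; invariance follows either from Cartan's formula together with $d\omega_X = 0$, or from the fact that a Hamiltonian action preserves the symplectic form. You are also right to flag that $V_a$ is tangent to $M$ (by $T^s$-invariance of $\mu$), since this is what makes $\iota_{V_a}$ compatible with $i^*$, and the final descent step uses the equivalence, stated in the paper, between being basic and descending to the quotient (which relies on the freeness of the action assumed throughout). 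The trade-off is the expected one: the paper's citation buys brevity and places the lemma in its standard context, while your argument is self-contained and makes visible exactly which hypotheses (Hamiltonian condition, closedness of $\omega_X$, invariance of $\mu$) are used and where.
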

\begin{proof}
    This is an application of the symplectic reduction theorem.
\end{proof}
Note that $i^*\Omega_X=0$ for degree reasons and we shall first reduce its degree. The idea is to work in the complexified tangent space of $X$, $TX^\C$, and to take the $(1,0)$-parts of the vector fields $J_XV_k$, namely we set
\[\xi_k:=-\frac{1}{2}(J_XV_k+iV_k),\] for all $k=1,...,s$ and we define $\Omega_M:=(\iota_{\xi_1}\cdots\iota_{\xi_s}\Omega_X)_M$. 
\begin{prop} $\Omega_M$ and $\omega_M$ satisfy the following equations:
    \begin{align}
        \Omega_M\wedge\omega_M&=0, \label{eqintermediaire1} \\
        \Omega_M\wedge\overline{\Omega_M}&=c_n\|V_1\|^2\cdots\|V_s\|^2\omega_M^n.\label{eqintermediaire2}
    \end{align}
\end{prop}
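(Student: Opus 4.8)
The plan is to reduce both equations to contraction identities for the $SU(n+s)$-structure $(\Omega_X,\omega_X)$ on $X$, and then pull everything back to $M$. The key objects are the holomorphic vector fields $\xi_k = -\tfrac{1}{2}(J_XV_k + iV_k)$, and I will exploit heavily the fact that $\omega_X$ is Kähler and of type $(1,1)$, that $\Omega_X$ is of type $(n+s,0)$, and that the $V_k$ are holomorphic, Hamiltonian and mutually commuting. First I would record the basic contraction rules: since $\Omega_X$ has no $(0,1)$ component, contracting with the $(0,1)$-part of $V_k$ kills nothing new, and $\iota_{\xi_k}\Omega_X = \iota_{-\frac12 J_X V_k}\Omega_X - \tfrac{i}{2}\iota_{V_k}\Omega_X$; using $J_X$-invariance of $\Omega_X$ (it is $(n+s,0)$, so $\lied_{J_X V_k}$ and $\iota$ interact through the type) one checks $\iota_{\xi_k}\Omega_X = \iota_{V_k}\Omega_X$ up to the appropriate normalization, so that $\Omega_M$ is genuinely the successive contraction of $\Omega_X$ by the real generating vector fields, just repackaged to sit in the correct bidegree on $M$.

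For the first equation \eqref{eqintermediaire1}, I would argue on $X$ before restricting. Since $\Omega_X\wedge\omega_X = 0$ by \eqref{SU(n)eq1}, I can contract this identity successively by $\xi_1,\dots,\xi_s$. Using the antiderivation property of $\iota$ and the fact that $\iota_{\xi_k}\omega_X$ is proportional (via the Hamiltonian/Kähler relation $\iota_{V_k}\omega_X = -d\langle\mu,e_k\rangle$ and $\iota_{J_XV_k}\omega_X = \pm g$-dual terms) to a $1$-form that vanishes when pulled back to $M$, the cross terms $(\iota_{\xi_{i_1}}\cdots\Omega_X)\wedge(\iota_{\xi_{j}}\omega_X)$ all restrict to zero on $M$: indeed $\iota_{V_k}\omega_X = -d\langle\mu,e_k\rangle$ pulls back to $0$ on the level set $M=\mu^{-1}(c)$, and the real/imaginary bookkeeping of $\xi_k$ converts the remaining piece into the differential of the moment map as well. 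What survives after restricting to $M$ is exactly $\Omega_M\wedge\omega_M$, giving $\Omega_M\wedge\omega_M = 0$.

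The second equation \eqref{eqintermediaire2} is the computational heart. The idea is to contract the volume normalization \eqref{SU(n)eq2}, $\Omega_X\wedge\overline{\Omega_X} = c_{n+s}\,\omega_X^{n+s}$, by the full string $\iota_{\xi_1}\cdots\iota_{\xi_s}\iota_{\overline{\xi_1}}\cdots\iota_{\overline{\xi_s}}$ and to track the combinatorial constants. On the left, contracting $\Omega_X$ by the $\xi_k$ and $\overline{\Omega_X}$ by the $\overline{\xi_k}$ produces $\Omega_M\wedge\overline{\Omega_M}$ together with norm factors coming from $\langle \xi_k,\overline{\xi_k}\rangle = \tfrac12\|V_k\|^2$ (the cross terms vanish because distinct $\xi_k$ are $h_X$-orthogonal once one uses that the $V_k$ span the tangent to the orbit and $\omega_X$ is compatible). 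On the right, contracting $\omega_X^{n+s}$ by the same vectors and restricting to $M$ should collapse $\omega_X^{n+s}$ to $\omega_M^n$ times the same norm factors $\|V_1\|^2\cdots\|V_s\|^2$, because contracting the top Kähler power by the orbit directions precisely removes the $s$ orbit-tangent planes and normalizes the volume on the horizontal $2n$-dimensional complement. Matching the constants $c_{n+s}$ and $c_n$ through the factorials and powers of $i$ is what fixes the final coefficient.

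\emph{Main obstacle.} The delicate part is the constant-chasing in \eqref{eqintermediaire2}: one must carefully show that the combinatorics of contracting $\omega_X^{n+s}$ by $2s$ vector fields reproduces exactly the factor $\|V_1\|^2\cdots\|V_s\|^2$ and converts $c_{n+s}\omega_X^{n+s}$ into $c_n\omega_M^n$ with no stray factor, and simultaneously that the same norm factors emerge from the $\xi_k$-contractions on the $\Omega_X\wedge\overline{\Omega_X}$ side so they cancel cleanly. This requires keeping track of the factor $\tfrac12$ in $\langle\xi_k,\overline{\xi_k}\rangle$, the sign $(-i)^{(n+s)^2}$ versus $(-i)^{n^2}$, and the binomial coefficients from expanding $\omega_X^{n+s}$; establishing the orthogonality of the $\{\xi_k\}$ and the vanishing of all mixed contractions (so that only the "diagonal" term survives) is the technical crux on which the whole normalization rests.
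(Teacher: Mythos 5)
Your strategy coincides with the paper's: contract the $SU(n+s)$-equations \eqref{SU(n)eq1} and \eqref{SU(n)eq2} by the fields $\xi_k$ (and their conjugates for the second one), use $\iota_{\overline{\xi_k}}\Omega_X=0$ and $\omega_X(\xi_k,\overline{\xi_k})=\frac{i}{2}\|V_k\|^2$, and restrict to $M$. But the two justifications you give for discarding the cross terms are false, and they are precisely the delicate points. For \eqref{eqintermediaire1} you assert that $\iota_{\xi_j}\omega_X$ pulls back to zero on $M$. It does not: writing $\xi_j=-\frac{1}{2}(J_XV_j+iV_j)$, the imaginary part of $\iota_{\xi_j}\omega_X$ is $\frac{1}{2}d\langle\mu,e_j\rangle$, which indeed dies on $M=\mu^{-1}(c)$, but the real part is the metric dual $\frac{1}{2}g(V_j,\cdot)$ of the generator, and a convention-free computation gives
\[
(\iota_{\xi_j}\omega_X)(V_j)=-\tfrac{1}{2}\,\omega_X(J_XV_j,V_j)=\tfrac{1}{2}\|V_j\|^2\neq 0,
\]
with $V_j$ tangent to $M$. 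So the cross terms survive as forms on $M$; they are merely \emph{vertical}, i.e.\ they annihilate the horizontal distribution $H=TM\cap J_X(TM)$. To conclude you must add that $\Omega_M$ and $\omega_M$ are themselves horizontal ($\iota_{V_k}\Omega_M=0$ because $\iota_{V_k}\Omega_X=i\,\iota_{\xi_k}\Omega_X$ turns into a repeated contraction, and $\iota_{V_k}\omega_M=-d\langle\mu,e_k\rangle|_M=0$), so that $\Omega_M\wedge\omega_M$ is determined by its values on $H$, where the vertical cross terms do vanish; even more directly, on the complex $n$-dimensional space $H_p$ the form $\Omega_M\wedge\omega_M$ has type $(n+1,1)$ and is therefore zero. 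Without some such argument, your step ``the cross terms all restrict to zero on $M$'' fails.

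For \eqref{eqintermediaire2} you dispose of the mixed contractions by asserting that distinct $\xi_j,\xi_k$ are $h_X$-orthogonal ``because the $V_k$ span the tangent to the orbit and $\omega_X$ is compatible''. This is not an argument, and the claim is false for a general basis of $\tor$, which is what the statement allows: a direct computation (using $T$-invariance of $\mu$) gives $\omega_X(\xi_j,\overline{\xi_k})=\frac{i}{2}g(V_j,V_k)$, and on $\C^2$ with the standard $T^2$-action and the basis $e_1,\ e_1+e_2$ one has $g(V_1,V_1+V_2)=\|V_1\|^2=|z_1|^2\neq 0$ on any regular level set. What the contraction of $c_{n+s}\omega_X^{n+s}$ actually produces (on horizontal vectors) is the Gram determinant $\det\big(\frac{i}{2}g(V_j,V_k)\big)$, which equals $(\frac{i}{2})^s\|V_1\|^2\cdots\|V_s\|^2$ only when the generators are pairwise orthogonal; in the $\C^2$ example above, $\Omega_M\wedge\overline{\Omega_M}=|z_1z_2|^2$ while $\|V_1'\|^2\|V_2'\|^2=|z_1|^2(|z_1|^2+|z_2|^2)$, so the diagonal product is genuinely the wrong invariant. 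To be fair, the paper's own one-line proof records only the diagonal contractions and is silent on exactly this point (its formula is literally correct only in the orthogonal case, otherwise the product of norms must be replaced by $\det(g(V_j,V_k))$), so you have reproduced the paper's gap rather than filled it; but your purported orthogonality proof would not survive scrutiny, and the same horizontality argument as in the first paragraph is again needed to discard the leftover vertical terms on both sides before comparing coefficients.
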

\begin{proof}
    Using $\Omega_X\wedge\omega_X=0$ and contracting successively by $\xi_1,\cdots\xi_s$, we obtain equation (\ref{eqintermediaire1}). For equation  (\ref{eqintermediaire2}), start from $\Omega_X\wedge\overline{\Omega_X}=c_{n+s}\omega_X^{n+s}$ and contract with $\xi_1,\overline{\xi_1},\cdots,\xi_s,\overline{\xi_s}$. Noticing that $\iota_{\overline{\xi_s}}\Omega_X=0$ and $\omega_X(\xi_k,\overline{\xi_k})=\frac{i}{2}\|V_k\|^2$, we obtain the second equation.
\end{proof}
 \indent $\Omega_M$ is horizontal by construction but it is not basic since it has charge $q$. To make it basic, we perform a twist on $\Omega_M$, following the idea of \cite{Larfors_2013,Larfors_2010}. We assume the existence of a twist form, say $\alpha_X$, see Definition \ref{twistformdef}, of charge $\frac{q}{2}$.\\
 \indent We recall some basic facts about contracting with $1$-forms. The Hermitian metric $h_X$ on $X$ gives rise to an  anti-linear isomorphism $h_X : T^{1,0}X\to (T^{1,0}X)^*$ given by \[\forall\xi\in T^{1,0}X, h_X(\cdot,\overline{\xi})\in(T^{1,0}X)^*.\]\indent For a $(1,0)$-form $\Phi$, we denote by $\Phi^\sharp$ the $(0,1)$ vector field dual to $\Phi$ and by conjugacy for $(0,1)$-forms. Intuitively, the contraction by a $1$-form $\Phi$ corresponds to contracting by $\Phi^\sharp$ and this operation is denoted by $\Phi\cdot$.
 \begin{prop}
 Let \[\hat\Omega:=\overline{\alpha_M}\cdot\Omega_M,\,\hat\omega:=\omega_M-\frac{i}{2\|\alpha_M\|^2}\alpha_M\wedge\overline{\alpha_M},\] where $\|\alpha_M\|^2:=\overline{\alpha_M}\cdot\alpha_M$ is the norm of $\alpha_M$.
 $\hat\Omega$ and $\hat\omega$ satisfy the following equations:
     \begin{align}
             \hat\Omega\wedge\hat\omega&=0, \label{eqintermediate3}\\
         \hat\Omega\wedge\overline{\hat\Omega}&=c_{n-1}\|\alpha_M\|^2\|V_1\|^2\cdots\|V_s\|^2\hat\omega^{n-1}.\label{eqintermediate4} 
     \end{align}
 \end{prop}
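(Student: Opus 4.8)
The plan is to deduce both identities formally from the already-established equations \eqref{eqintermediaire1}--\eqref{eqintermediaire2} for $(\Omega_M,\omega_M)$, using the interior-product (Cartan/Leibniz) calculus applied to the $(1,0)$-vector field $\zeta:=\overline{\alpha_M}^\sharp$ dual to $\overline{\alpha_M}$ through $h_X$, so that $\hat\Omega=\iota_\zeta\Omega_M$ and $\overline{\hat\Omega}=\iota_{\overline\zeta}\overline{\Omega_M}$ with $\overline\zeta=\alpha_M^\sharp$. Before any computation I would record the preliminaries that make this work. From horizontality and the $(1,0)$-type of $\alpha_X$ one gets $\alpha_X(\xi_k)=-i\,\alpha_X(V_k)=0$, i.e. $h_X(\xi_k,\overline\zeta)=0$; combined with the Hamiltonian relation $\iota_{V_a}\omega_X=-d\langle\mu,a\rangle$ and $V_a=i(\xi_a-\overline{\xi_a})$, this gives $\omega_X(V_a,\zeta)=i\,\omega_X(\zeta,\overline{\xi_a})=0$, so $\zeta$ and $\overline\zeta$ are tangent to $M$ and $\iota_\zeta,\iota_{\overline\zeta}$ commute with the restriction $i^\ast$. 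I would then collect the elementary contractions: the Kähler identity $\iota_\zeta\omega_M=\tfrac{i}{2}\overline{\alpha_M}$ (hence $\iota_{\overline\zeta}\omega_M=-\tfrac i2\alpha_M$), the normalization $\iota_\zeta\alpha_M=\|\alpha_M\|^2$, and the type vanishings $\iota_{\overline\zeta}\Omega_M=0$ and $\iota_\zeta\overline{\Omega_M}=0$. Finally, since $\Omega_X\wedge\alpha_X=0$ for degree reasons and $\iota_{\xi_k}\alpha_X=0$, contracting successively by the $\xi_k$ yields $\Omega_M\wedge\alpha_M=0$.

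For \eqref{eqintermediate3} I would expand $\hat\Omega\wedge\hat\omega=(\iota_\zeta\Omega_M)\wedge\omega_M-\tfrac{i}{2\|\alpha_M\|^2}(\iota_\zeta\Omega_M)\wedge\alpha_M\wedge\overline{\alpha_M}$. Applying $\iota_\zeta$ to $\Omega_M\wedge\omega_M=0$ and using $\iota_\zeta\omega_M=\tfrac i2\overline{\alpha_M}$ gives $(\iota_\zeta\Omega_M)\wedge\omega_M=(-1)^{n+1}\tfrac i2\,\Omega_M\wedge\overline{\alpha_M}$, while applying $\iota_\zeta$ to $\Omega_M\wedge\alpha_M=0$ and using $\iota_\zeta\alpha_M=\|\alpha_M\|^2$ gives $(\iota_\zeta\Omega_M)\wedge\alpha_M=(-1)^{n+1}\|\alpha_M\|^2\,\Omega_M$, whence $(\iota_\zeta\Omega_M)\wedge\alpha_M\wedge\overline{\alpha_M}=(-1)^{n+1}\|\alpha_M\|^2\,\Omega_M\wedge\overline{\alpha_M}$. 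The two contributions to $\hat\Omega\wedge\hat\omega$ are then equal and opposite, so the sum vanishes.

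For \eqref{eqintermediate4} I would first observe that $\iota_{\overline\zeta}\Omega_M=0$ and $\iota_\zeta\overline{\Omega_M}=0$ make a double application of Leibniz collapse to $\iota_\zeta\iota_{\overline\zeta}(\Omega_M\wedge\overline{\Omega_M})=(-1)^n\,\hat\Omega\wedge\overline{\hat\Omega}$. Substituting \eqref{eqintermediaire2} reduces everything to computing $\iota_\zeta\iota_{\overline\zeta}\omega_M^n$. Using $\iota_{\overline\zeta}\omega_M=-\tfrac i2\alpha_M$, $\iota_\zeta\omega_M=\tfrac i2\overline{\alpha_M}$ and $\iota_\zeta\alpha_M=\|\alpha_M\|^2$, the Leibniz rule gives
\[\iota_\zeta\iota_{\overline\zeta}\omega_M^n=-\tfrac{ni}{2}\|\alpha_M\|^2\,\omega_M^{n-1}-\tfrac{n(n-1)}{4}\,\alpha_M\wedge\overline{\alpha_M}\wedge\omega_M^{n-2}.\]
On the other hand, since $(\alpha_M\wedge\overline{\alpha_M})^{\wedge 2}=0$ the expansion of $\hat\omega^{n-1}=(\omega_M-\tfrac{i}{2\|\alpha_M\|^2}\alpha_M\wedge\overline{\alpha_M})^{n-1}$ truncates after its linear term, and one checks that the right-hand side above equals $-\tfrac{ni}{2}\|\alpha_M\|^2\,\hat\omega^{n-1}$. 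Combining, $\hat\Omega\wedge\overline{\hat\Omega}=(-1)^{n+1}\tfrac{ni}{2}\,c_n\,\|\alpha_M\|^2\|V_1\|^2\cdots\|V_s\|^2\,\hat\omega^{n-1}$, and the claim follows from the scalar identity $(-1)^{n+1}\tfrac{ni}{2}\,c_n=c_{n-1}$, a direct check using $c_m=\tfrac{2^m}{m!}(-i)^{m^2}$.

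The computations are routine interior-product algebra; the delicate points are, first, verifying that $\zeta$ is tangent to $M$ so that every contraction descends through $i^\ast$, which is precisely where horizontality and the twist-form charge hypothesis enter, and second, the bookkeeping of signs and of the Kähler normalization constant, culminating in $(-1)^{n+1}\tfrac{ni}{2}c_n=c_{n-1}$, which reconciles the $SU(n)$-type equation upstairs with the desired $SU(n-1)$-type equation for $(\hat\Omega,\hat\omega)$. I expect this constant-matching to be the main obstacle, since a single error in a sign or a factor of $2$ propagates through both displayed identities.
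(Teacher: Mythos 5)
Your proposal is correct and takes essentially the same approach as the paper, whose own proof consists of exactly the same ingredients --- the contraction identities $\overline{\alpha_X}\cdot\omega_X=\frac{i}{2}\overline{\alpha_X}$ and $\alpha_X\cdot\omega_X=-\frac{i}{2}\alpha_X$, equations (\ref{eqintermediaire1}) and (\ref{eqintermediaire2}), horizontality of $\alpha_X$, and $\Omega_X\wedge\alpha_X=0$ --- followed by the instruction to expand the expressions. Your write-up simply carries out that expansion in full, and the additional details you supply beyond the paper's sketch (tangency of $\zeta=\overline{\alpha_M}^\sharp$ to $M$ so that contraction commutes with restriction, the derived identity $\Omega_M\wedge\alpha_M=0$, and the constant check $(-1)^{n+1}\frac{ni}{2}\,c_n=c_{n-1}$) are all correct.
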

 \begin{proof}
     We first note that $\overline{\alpha_X}\cdot\omega_X=\frac{i}{2}\overline{\alpha_X}$ (resp. $\alpha_X\cdot\omega_X=-\frac{i}{2 }\alpha_X)$. Using equations (\ref{eqintermediaire1}), (\ref{eqintermediaire2}), the horizontality condition of $\alpha_X$ and $\Omega_X\wedge\alpha_X=0$, we obtain equations (\ref{eqintermediate3}) and (\ref{eqintermediate4}) by expanding the expressions. 
 \end{proof}
 \indent Note that $\alpha_M\wedge\overline{\alpha_M}$ is basic and $\|\alpha_M\|^2$ is $T^s$-invariant as the action is Hamiltonian and holomorphic. As a result, $\hat\omega$ is basic on $M$. For $\hat\Omega$, we first remark that \[\forall k, \lied_{V_k}\overline{\alpha_M}^\sharp=-\frac{q(V_k)}{2}\overline{\alpha_M}^\sharp.\] This comes from the fact that $\overline{\alpha_M}$ is of charge $-\frac{q}{2}$ and the $V_k$ are Killing vector fields. Using properties of the Lie derivative, we get that $\hat\Omega$ is of charge $\frac{q}{2}$. \\ 
 \indent With all these ingredients, we are able to give a proof of Theorem \ref{twistsimpletheo}: 
 \begin{proof}
      Note that the $\Omega,\omega$ of the statement of the theorem are \[\Omega=\overline{\alpha_M}\wedge\hat\Omega,\omega=\hat\omega-\frac{i}{2\|\alpha_M\|^2}\alpha_M\wedge\overline{\alpha_M}.\] By our previous computations, it is straightforward to see that $\Omega,\omega$ are both basic on $M$. The equation (\ref{eq1theo2}) is also immediate by using (\ref{eqintermediate3}) and the fact that $\overline{\alpha_M}$ is a $1$-form. We detail the computations of (\ref{eq1theo}):
      \begin{align*}
          \Omega\wedge\overline{\Omega}&=\overline{\alpha_M}\wedge\hat\Omega\wedge\alpha_M\wedge\overline{\hat\Omega}\\
          &=(-1)^n\alpha_M\wedge\overline{\alpha_M}\wedge\hat\Omega\wedge\overline{\hat\Omega}\\
          &=(-1)^nc_{n-1}\|\alpha_M\|^2\|V_1\|^2\cdots\|V_s\|^2\alpha_M\wedge\overline{\alpha_M}\wedge\hat\omega^{n-1}
      \end{align*}
      By equations (\ref{eqintermediate3}) and (\ref{eqintermediate4}), we get $\hat\omega^n=0$ 
      and thus $\alpha_M\wedge\overline{\alpha_M}\wedge\hat\omega^{n-1}=\frac{2i\|\alpha_M\|^2}{n}\omega^n$. Plugging this in the last line of the computation of $\Omega\wedge\overline{\Omega}$ and using the expression for $c_n$, we get equation (\ref{eq1theo}).
 \end{proof}
  Corollary \ref{cor1} is immediate since $\|\alpha_M\|^2,\|V_k\|^2$ are $T^s$-invariant functions. Observe that even if $\alpha$ has zeros, $\omega$ remains bounded. In that case, we will get an $SU(n)$-structure on the manifold $Y$ with singularities.
\subsection{A multiple twist theorem}
In this section, we prove Theorem \ref{multipletwisttheo} which is a repetitive application of the previous process. To make it work, we need to define what orthogonal forms are:
\begin{Def}\label{orthogonaldef}
    Two $1$-forms $\alpha,\beta$ on $X$ are orthogonal if \[\overline{\alpha}\cdot\beta=\overline{\beta}\cdot\alpha=0.\]
\end{Def}
\indent We now give a proof of Theorem \ref{multipletwisttheo} when $l=2$, for general $l$, it is just an induction of this special case.
\begin{proof}
    Let $\alpha_X,\beta_X$ be two orthogonal twist forms on $X$ of respective charges $k_\alpha,k_\beta$ such that $k_\alpha+k_\beta=\frac{q}{2}$. Set \[\hat\Omega':=\overline{\alpha_M}\cdot\Omega_M,\,\hat\omega':=\omega_M-\frac{i}{2\|\alpha_M\|^2}\alpha_M\wedge\overline{\alpha_M} ,\]\[\hat\Omega:=\overline{\beta_M}\cdot\hat\Omega',\,\omega':=\hat\omega'-\frac{i}{2\|\beta_M\|^2}\beta_M\wedge\overline{\beta_M}.\]
    All the computations done in Section $2.1$ are valid for $(\hat\Omega',\hat\omega')$. We first prove similar equations for $(\hat\Omega,\hat\omega)$ to equations (\ref{eqintermediate3}) and (\ref{eqintermediate4}). Contracting equation (\ref{eqintermediate3}) by $\overline{\beta_M}$ gives :
    \begin{equation}\label{eq1proof}
        \hat\Omega\wedge\hat\omega'+(-1)^{n-1}\hat\Omega'\wedge\overline{\beta_M}\cdot\hat\omega'=0.
    \end{equation}
    By the orthogonality assumption, we have \[\overline{\beta_M}\cdot\hat\omega'=\overline{\beta_M}\cdot\omega_M=\frac{i}{2}\overline{\beta_M}.\]
    Plugging this into equation (\ref{eq1proof}), we get 
    \begin{equation}\label{eq2proof}
        \hat\Omega\wedge\bigg(\hat\omega+\frac{i}{2\|\beta_M\|^2}\beta_M\wedge\overline{\beta_M}\bigg)+\frac{i}{2}(-1)^{n-1}\hat\Omega'\wedge\overline{\beta_M}=0.
    \end{equation}
    Note that $\hat\Omega\wedge\beta_M=(-1)^n\|\beta_M\|^2\hat\Omega'$ and with equation (\ref{eq2proof}) it follows that: \begin{equation}\label{eq3proof}
        \hat\Omega\wedge\hat\omega=0.
    \end{equation}
    Using equation (\ref{eqintermediate4}) and contracting it by $\beta_M$ and $\overline{\beta_M}$ eventually gives:
    \begin{equation}\label{eq4proof}
        \hat\Omega\wedge\overline{\hat\Omega}=c_{n-2}\|\alpha_M\|^2\|\beta_M\|^2\|V_1\|^2\cdots\|V_s\|^2\hat\omega^{n-2}.
    \end{equation}
    A simple computation also shows that $\hat\Omega$ is charge definite of charge $\frac{q}{2}$.\\ \indent Let \[\Omega=\overline{\beta_M}\wedge\overline{\alpha_M}\wedge\hat\Omega\quad\text{and}\quad\omega=\hat\omega-\frac{i}{2\|\alpha_M\|^2}\alpha_M\wedge\overline{\alpha_M}-\frac{i}{2\|\beta_M\|^2}\beta_M\wedge\overline{\beta_M}\] be the forms of the statement. They are basic on $M$ since $k_\alpha+k_\beta=\frac{q}{2}$, equation (\ref{eq1theo2}) is a direct consequence of equation (\ref{eq3proof}). Equation (\ref{eq2theo2}) comes from equation (\ref{eq4proof}) and from the following remark:
    \[\omega^n=-\frac{n(n-1)}{\|\alpha_M\|^2\|\beta_M\|^2}\alpha_M\wedge\overline{\alpha_M}\wedge\beta_M\wedge\overline{\beta_M}\wedge\hat\omega^{n-2}.\]
\end{proof}
The orthogonality condition seems quite restrictive. Note that we could replace this condition by the freeness of the family at every point. Starting from such a family, we can produce using a Gram-Schmidt process an orthogonal family, see Section $3.2$.
\section{Geometric applications}
\subsection{Twist forms on $(\C^N,(\C^*)_{diag})$}
    This section is motivated by the example of $\C\p^3$ in \cite{Larfors_2013,Larfors_2010} where the authors find an explicit twist form on $\C^4$ and also compute the torsion of the constructed $SU(3)$-structure. We review this example and also find a family of twist forms of charge $2$. Using this family, we provide a whole family of $SU(3)$-structures on $\C\p^3$. We also prove that on $\C^N\backslash\{0\}$, holomorphic twist forms of polynomial type of charge greater than $2$ do not exist. \\
    \indent Consider $\C^N$ with its canonical Kähler structure with Kähler form \[\omega_0:=\frac{i}{2}\sum_{j=1}^Ndz_j\wedge d\overline{z_j}.\]\indent We also let $\Omega_0$ be the standard holomorphic volume form on $\C^N$ given by \[\Omega_0:=dz_1\wedge\cdots\wedge dz_N.\] \indent $(\Omega_0,\omega_0)$ defines the standard $SU(N)$-structure on $\C^N$. Theorem \ref{twistsimpletheo} relies on the existence of twist forms. It is natural to wonder whether they exist and we first study this question on $\C^N$ with the standard action of $T^1=S^1$.\newpage \indent Let $\xi:=\sum_{j=1}^Nz_j\partial_j$ be the holomorphic Euler vector field introduced in Section $2.1$ for this action. We restrict our attention to holomorphic twist forms, so that only $\xi$ matters in the computations. Let \[\alpha:=\sum_{j=1}^N\alpha_jdz_j\] be a holomorphic twist form of integer charge $k=k_\alpha(\xi)\geq 1$. Since the functions $z_j$ are of charge $1$, we get that the holomorphic functions $\alpha_j$ must be of charge $k-1$. But, this reads as \[\sum_{l=1}^Nz_l\frac{\partial\alpha_j}{\partial z_l}=(k-1)\alpha_j,\] and forces $\alpha_j$ to be homogeneous polynomials of degree $k-1$. This shows that any holomorphic twist form on $\C^N$ with its diagonal circle action must be of polynomial type. \\ \indent Rewriting $\alpha=\sum_{j=1}^NP_jdz_j$ with $P_j$ homogeneous polynomials of degree $k-1$, the horizontality condition is \[\forall z=(z_1,\cdots,z_N)\in\C^N, \sum_{j=1}^NP_j(z)z_j=0.\]
    \indent The vanishing points of $\alpha$ are then the common zeros of the $P_j$ and as they are homogeneous, at least $0$ is a vanishing point of $\alpha$. Thus, we will require that $\alpha$ is nowhere-vanishing on $\C^N\backslash\{0\}$. \\
    \indent We first focus on the case $k=2$ and we show the classification result given by the Proposition \ref{su(3)cp3}. We introduce the fundamental forms \[\alpha_{ij}:=z_idz_j-z_jdz_i,\] for $1\leq i,j\leq N$. These fundamental forms are horizontal and charge definite of charge $2$. Using these fundamental forms, we can prove Proposition \ref{su(3)cp3}:
    \begin{proof}
        If $M=(m_{ij})_{1\leq i,j\leq N}$ is a skew-symmetric invertible $N\times N$ complex matrix and $\alpha_M:=\sum_{i<j}m_{ij}\alpha_{ij}$ is horizontal and charge definite of charge $2$ since the $\alpha_{ij}$ satisfy these properties. By construction, $\alpha_M$ vanishes on the kernel of $M$ which is zero by assumption. Conversely, if $\alpha$ is a holomorphic twist form of charge $2$ on $\C^N\backslash\{0\}$, then we already know that $\alpha$ must be of the form \[\alpha={}^tZMdZ,\] where $M$ is some $N\times N$ complex matrix. Note that $M$ is invertible since $\alpha$ is nowhere-vanishing on $\C^N\backslash\{0\}$. Eventually, the horizontality condition forces $M$ to be skew-symmetric. 
    \end{proof}
\noindent\textbf{Remark 3.}
        Note that this also proves that on even dimensional $\C^{2n+1}\backslash\{0\}$, there are no holomorphic twist forms of charge $2$. \\ 
    \indent Theorem \ref{cp3} is then just an application of both Proposition \ref{su(3)cp3} and Theorem \ref{twistsimpletheo}. Note that for \[M=\begin{pmatrix}
        J&0\\0&J
    \end{pmatrix},\] where $J=\begin{pmatrix}
        0&1\\-1&0
    \end{pmatrix}$, the corresponding $\alpha_M$ is the twist form originally used in \cite{Larfors_2013,Larfors_2010}. In Section $4$, we will study more specifically the geometry of the $SU(3)$-structures resulting from Theorem \ref{cp3}.\\ \\
    \noindent  \textbf{Remark 4.} We have chosen here to work with the diagonal action of $S^1$, but we could have worked with other actions of $S^1$ on $\C^4$ and have $SU(3)$-structures on weighted complex projective spaces. More precisely, if $k=(k_1,k_2,k_3,k_4)\in\Z^4$ denotes the charge of the weighted $S^1$ action, we can slightly modify the forms given by Proposition \ref{su(3)cp3} to make them twist forms for $\C\p^3_k$.
   \\\\ \indent We now turn our attention to twist forms of charge $k>2$ to see if we can apply Theorem \ref{twistsimpletheo} such as we did for $\C\p^3$. However, it turns out that no holomorphic twist forms on $\C^N\backslash\{0\}$ of charge $k>2$ exist. More precisely, we show that: 
   \begin{prop}\label{nonexistence}
       Let $(P_j)_{1\leq j\leq N}$ be $N$ homogeneous complex polynomials of degree $k\geq 2$ in $N$ variables. If \[\forall z=(z_1,\cdots,z_N)\in\C^N,\sum_{i=1}^Nz_iP_i(z)=0,\] then the $P_j$ have a non trivial common root.  
   \end{prop}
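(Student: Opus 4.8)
The plan is to argue by contraposition: assuming the forms $P_1,\dots,P_N$ have no common zero other than the origin, I will show that the linear syzygy $\sum_i z_iP_i=0$ cannot occur once $k\geq 2$. I would work in the graded polynomial ring $R=\C[z_1,\dots,z_N]$ and set $I=(P_1,\dots,P_N)$. Because the $P_i$ are homogeneous, the only common zero is automatically the origin, so the hypothesis ``no nontrivial common root'' means precisely that the radical of $I$ is the irrelevant ideal $\mathfrak{m}=(z_1,\dots,z_N)$; equivalently $V(P_1,\dots,P_N)=\emptyset$ in $\mathbb{P}^{N-1}$. Thus $I$ is $\mathfrak{m}$-primary and $R/I$ is Artinian. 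Having $N=\dim R$ homogeneous generators that cut out a zero-dimensional quotient, the sequence $P_1,\dots,P_N$ is a homogeneous system of parameters.

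The first key input is that $R$, being a polynomial ring, is Cohen--Macaulay, so every homogeneous system of parameters is a regular sequence. For a regular sequence the Koszul complex on $P_1,\dots,P_N$ is acyclic; in particular the syzygy module $\ker(\phi)$, where $\phi:R^N\to R$ sends $e_i\mapsto P_i$, is generated by the Koszul relations $P_ie_j-P_je_i$. The second key input is degree bookkeeping. Grading $R^N$ by declaring $\deg e_i=k=\deg P_i$ makes $\phi$ homogeneous of degree $0$ and $\ker(\phi)$ a graded submodule. Each Koszul relation $P_ie_j-P_je_i$ is homogeneous of degree $2k$, so $\ker(\phi)$ is generated in degree $2k$ and vanishes in every lower degree.

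Now the hypothesis $\sum_i z_iP_i=0$ says exactly that $(z_1,\dots,z_N)\in\ker(\phi)$, and this element is homogeneous of degree $k+1$. Since $k\geq 2$ forces $k+1<2k$, it must vanish, i.e.\ all $z_i=0$, which is absurd. Hence $I$ is not $\mathfrak{m}$-primary, $V(P_1,\dots,P_N)$ strictly contains the origin, and the $P_j$ admit a nontrivial common root, as required.

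I expect the main obstacle to be justifying cleanly that the no-common-zero hypothesis forces $P_1,\dots,P_N$ to be a regular sequence: this is where the Cohen--Macaulay property of $R$ enters (equivalently, that $R$ is a finite free module over the subalgebra $\C[P_1,\dots,P_N]$ generated by a system of parameters), and it is the only non-elementary ingredient. After that, the contradiction is purely a comparison between the degree $k+1$ of the given syzygy and the degree $2k$ in which Koszul syzygies are generated, which is precisely where $k\geq 2$ is used; for $k=1$ these degrees coincide and linear syzygies genuinely exist. It is worth flagging that a naive projective dimension count on $V(P_1,\dots,P_{N-1})$ combined with the relation does \emph{not} close the argument, since the relation only yields $z_NP_N=0$ on that locus, a condition already met by a single point lying in $\{z_N=0\}$; one really must exploit $\sum_i z_iP_i=0$ as a global polynomial identity, which is exactly what the Koszul argument does.
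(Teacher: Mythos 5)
Your proof is correct, and it takes a genuinely different route from the paper's. The paper argues sheaf-theoretically on $\C\p^{N-1}$: it views the $P_j$ as a surjection $\Or(-k)^{\oplus N}\to\Or$ with kernel $K$, twists the resulting short exact sequence by $\Or(d)$, and invokes $H^0(\Or(d-k))=0$ to kill $H^0(K(d))$ for $d<k$. You instead stay in the graded ring $R=\C[z_1,\dots,z_N]$: no nontrivial common zero makes $P_1,\dots,P_N$ a homogeneous system of parameters, hence a regular sequence since $R$ is Cohen--Macaulay, so the Koszul complex is acyclic and every syzygy is an $R$-combination of the relations $P_ie_j-P_je_i$, which live in degree $2k$ (with $\deg e_i=k$), whereas the Euler syzygy $\sum_i z_iP_i=0$ lives in degree $k+1<2k$. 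Besides being self-contained, your route buys something concrete: the degree bookkeeping is airtight precisely where the paper's argument is delicate. Under the correct identification, a syzygy by polynomials of degree $e$ corresponds to a section of $K(e+k)$, so the linear syzygy $(z_1,\dots,z_N)$ is a section of $K(k+1)$, which the vanishing established in the paper (only for twists $d<k$) does not reach; completing the sheaf-theoretic argument would require resolving $K$ by the remaining terms of the Koszul complex of sheaves and using the vanishing of intermediate cohomology of line bundles on $\C\p^{N-1}$ --- exactly the global exactness that your regular-sequence argument encodes algebraically. Your closing remarks are also on target: $k\geq2$ enters only through the inequality $k+1<2k$, and for $k=1$ the statement genuinely fails, as witnessed by the forms of Proposition \ref{su(3)cp3}, whose linear syzygy $(z_1,\dots,z_N)=\sum_{i<j}(M^{-1})_{ij}\,(P_ie_j-P_je_i)$ sits exactly in the allowed degree $2k=k+1=2$.
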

\begin{proof}
 Homogeneous polynomials of $N$ variables of degree $k$ are precisely holomorphic sections of the line bundle $\Or(k)$ over $\C\p^{N-1}$. By duality, we can see the $P_j$ as morphisms from $\Or(-k)$ to $\Or$ the trivial line bundle. Assume the $P_j$ have only zero as common root. Then, the map $P_1\oplus\cdots\oplus P_N : \Or(-k)^N\to\Or$ is surjective. Consider its kernel $K$. The global sections of $K$ consist of homogeneous polynomials, say $Q_1,\cdots,Q_N$, such that \[\sum_iQ_iP_i=0.\] If $d$ is the degree of the $Q_i$, the global sections are sections of $K(d)=K\otimes\Or(d)$. We have the following exact sequence \[0\longrightarrow K\longrightarrow\Or(-k)^{\oplus N}\longrightarrow \Or\longrightarrow 0\]  Tensoring with $\Or(d)$ gives \[0\longrightarrow K(d)\longrightarrow\Or(-k+d)^{\oplus N}\longrightarrow\Or(d)\longrightarrow 0\] In particular, it induces the following sequence \[0\longrightarrow H^0(K(d))\longrightarrow H^0(\Or(-k+d))^{\oplus N}\] It is well-known that if $-k+d<0$, then $H^0(\Or(-k+d))=0$ and thus $H^0(K(d))=0$ for $d<k$. In conclusion if $k\geq 2$, then the $Z_i$ cannot be in $K$.
\end{proof}
In summary, we have proved that for the diagonal $S^1$ action on $\C^{n+1}$, the only holomorphic twist forms are those of charge $2$ given by Proposition \ref{su(3)cp3}.
\subsection{Case of $\C\p^{4n-1}$}
Proposition \ref{nonexistence} shows that we cannot directly apply Theorem \ref{twistsimpletheo} with holomorphic twist forms for complex projective spaces of higher dimension than $3$. This leads us to use Theorem \ref{multipletwisttheo} instead. \\ 
\indent We go back to the $\C^N$ case of Section $3.1$ with $N=4n$. We let \[\mathcal{T}:=\{\alpha_M, M\in GL(N,\C),\, {}^tM=-M\}\] be the set of holomorphic twist forms we have found in the previous section. Note that two forms of $\mathcal{T}$ are not orthogonal in general. Our first goal is to produce an orthogonal family of twist forms out of $\mathcal{T}$.\\ \indent We outline the Gram-Schmidt process to obtain such a family. 
\begin{Def}
    Let $\alpha$ be a nowhere-vanishing $1$-form. We define the orthogonal projector of $\alpha$, $P_\alpha$ as \[\forall\Phi\in T^*\C^N, P_\alpha(\Phi)=\Phi-\frac{\overline{\alpha}\cdot\Phi}{\|\alpha\|^2}\alpha.\] 
\end{Def}
The following lemma describes how the orthogonal projectors behave with $\mathcal{T}$: 
\begin{lemma}\label{gramschmidt}
    Let $\alpha,\beta$ be two forms of $\mathcal{T}$. We have :
    \begin{enumerate}[i)]
        \item $P_\alpha(\beta)$ is a $(1,0)$-form orthogonal to $\alpha$.
        \item $P_\alpha(\beta)$ is horizontal, charge definite of charge $2$.
        \item The norm of $P_\alpha(\beta)$ is \[\|P_\alpha(\beta)\|^2=\|\beta\|^2-\frac{|\overline{\alpha}\cdot\beta|^2}{\|\alpha\|^2}.\]
    \end{enumerate}
    In particular, if $\alpha$ and $\beta$ are linearly independent, then $P_\alpha(\beta)$ is nowhere-vanishing. 
\end{lemma}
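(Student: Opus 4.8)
The plan is to work directly from the defining formula $P_\alpha(\beta)=\beta-\frac{\overline{\alpha}\cdot\beta}{\|\alpha\|^2}\alpha$, treating the scalar $\frac{\overline{\alpha}\cdot\beta}{\|\alpha\|^2}$ as a smooth function and exploiting the algebraic properties of the contraction $\cdot$ induced by $h_X$ together with its conjugation symmetry $\overline{\overline{\alpha}\cdot\beta}=\overline{\beta}\cdot\alpha$. All three assertions then reduce to formal manipulations, except for one invariance statement which is the genuine content.

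For i), I note that $P_\alpha(\beta)$ is a function-linear combination of the $(1,0)$-forms $\alpha$ and $\beta$, hence is again of type $(1,0)$. For orthogonality I contract on each side. On one side, $\overline{\alpha}\cdot P_\alpha(\beta)=\overline{\alpha}\cdot\beta-\frac{\overline{\alpha}\cdot\beta}{\|\alpha\|^2}(\overline{\alpha}\cdot\alpha)=0$ because $\overline{\alpha}\cdot\alpha=\|\alpha\|^2$. On the other side, applying the conjugation symmetry rewrites $\overline{P_\alpha(\beta)}=\overline{\beta}-\frac{\overline{\beta}\cdot\alpha}{\|\alpha\|^2}\overline{\alpha}$, and the identical computation gives $\overline{P_\alpha(\beta)}\cdot\alpha=0$. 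This establishes both equalities of Definition \ref{orthogonaldef}.

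For ii), horizontality is immediate: evaluating $P_\alpha(\beta)$ on any $V_a$ annihilates both $\alpha(V_a)$ and $\beta(V_a)$, and the coefficient is a mere function. The substantive point is the charge. I would first show that the coefficient function is $T^s$-invariant. Since each $V_a$ is a holomorphic Killing field, $\lied_{V_a}$ is a derivation compatible with the Hermitian contraction, so $\lied_{V_a}(\overline{\alpha}\cdot\beta)=(\overline{\lied_{V_a}\alpha})\cdot\beta+\overline{\alpha}\cdot(\lied_{V_a}\beta)$; inserting $\lied_{V_a}\alpha=k_\alpha(a)\alpha$ and $\lied_{V_a}\beta=k_\beta(a)\beta$, and using that the relevant charges are purely imaginary so that the conjugate form carries the opposite charge, the two contributions cancel. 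Hence $\overline{\alpha}\cdot\beta$ and likewise $\|\alpha\|^2$ are invariant. With an invariant coefficient, $\lied_{V_a}P_\alpha(\beta)=k_\alpha(a)\beta-\frac{\overline{\alpha}\cdot\beta}{\|\alpha\|^2}k_\alpha(a)\alpha=k_\alpha(a)P_\alpha(\beta)$, where I use $k_\alpha=k_\beta$ since both forms lie in $\mathcal{T}$ and so have charge $2$. This invariance of the coefficient is the one place that genuinely uses the Killing and purely-imaginary-charge hypotheses, and I expect it to be the main obstacle; the rest is bookkeeping.

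For iii), I expand $\|P_\alpha(\beta)\|^2=\overline{P_\alpha(\beta)}\cdot P_\alpha(\beta)$ by bilinearity of the contraction: the two cross terms and the quadratic term each evaluate to $\frac{|\overline{\alpha}\cdot\beta|^2}{\|\alpha\|^2}$, so two of the three cancel and leave $\|\beta\|^2-\frac{|\overline{\alpha}\cdot\beta|^2}{\|\alpha\|^2}$, as claimed. Finally, the nowhere-vanishing statement follows from the pointwise Cauchy--Schwarz inequality for the Hermitian inner product on $(T^{1,0})^*$: at each point $z$ one has $\|P_\alpha(\beta)(z)\|^2\geq 0$, with equality precisely when $\alpha_z$ and $\beta_z$ are proportional. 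Consequently $P_\alpha(\beta)$ can vanish only where $\alpha$ and $\beta$ become pointwise linearly dependent, so it is nowhere-vanishing wherever they remain independent.
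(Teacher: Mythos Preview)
Your proof is correct and follows essentially the same approach as the paper's, which simply declares points i) and iii) straightforward and asserts ii) by noting that $\frac{\overline{\alpha}\cdot\beta}{\|\alpha\|^2}$ is invariant. You have merely filled in the computations the paper omits, including the Cauchy--Schwarz argument for the nowhere-vanishing claim, which the paper does not spell out.
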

\begin{proof}
    Points $i)$ and $iii)$ are straightforward. $P_\alpha(\beta)$ is horizontal and is of charge $2$ as $\frac{\overline{\alpha}\cdot\beta}{\|\alpha\|^2}$ is invariant.
\end{proof}
This process extends to many forms of $\mathcal{T}$ by induction in the same way as for the usual Gram-Schmidt process. However, there is no subfamily of $n\geq 2$ linearly independent forms of $\mathcal{T}$. In fact, if we use any subfamily of $n$ forms of $\mathcal{T}$, the Gram-Schmidt process will produce zeros and so singularities of the $SU(4n-1)$-structure. For general $n$, it is hard to know where these singularities might appear, but for $n=2$, we have the following lemma:
\begin{lemma}
    Let $\alpha_1,\alpha_2$ be two generic twist forms of $\mathcal{T}$ on $\C^8\backslash\{0\}$. $\alpha_1,\alpha_2$ are linearly dependent at four different complex planes in $\C^8$.
\end{lemma}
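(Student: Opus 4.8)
The plan is to translate the pointwise linear dependence of the two twist forms into a statement about a pencil of skew-symmetric matrices. Writing $\alpha_1=\alpha_{M_1}={}^tZM_1dZ$ and $\alpha_2=\alpha_{M_2}={}^tZM_2dZ$, the covector $\alpha_{M_i}|_Z$ has components $({}^tZM_i)_j=-(M_iZ)_j$ in the basis $dz_j$, so $\alpha_{M_i}|_Z$ is dual to the vector $M_iZ\in\C^8$. Since $M_1,M_2$ are invertible, $M_iZ\neq 0$ for every $Z\neq 0$, so $\alpha_1$ and $\alpha_2$ are linearly dependent at $Z$ exactly when $M_1Z$ and $M_2Z$ are proportional, i.e. when $(\lambda M_1+\mu M_2)Z=0$ for some $[\lambda:\mu]\in\p^1$. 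Hence the dependence locus is $\bigcup_{[\lambda:\mu]\in\p^1}\ker(\lambda M_1+\mu M_2)$.

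First I would analyse the pencil $N(\lambda,\mu):=\lambda M_1+\mu M_2$, a family of skew-symmetric $8\times 8$ matrices. Such a matrix has nontrivial kernel iff $\det N(\lambda,\mu)=0$, and since $\det=\mathrm{Pf}^2$ for even skew-symmetric matrices this is equivalent to $\mathrm{Pf}(N(\lambda,\mu))=0$. As the Pfaffian of an $8\times 8$ skew matrix is homogeneous of degree $4$ in the entries, and the entries of $N$ are linear in $(\lambda,\mu)$, the polynomial $\mathrm{Pf}(N(\lambda,\mu))$ is homogeneous of degree $4$ in $(\lambda,\mu)$. For generic $M_1,M_2$ it has four distinct roots $[\lambda_k:\mu_k]\in\p^1$, $k=1,\dots,4$, and since $\ker N(\lambda,\mu)$ is nonzero only at these roots, the dependence locus is the union of the four kernels $\Pi_k:=\ker N(\lambda_k,\mu_k)$.

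It remains to check that each $\Pi_k$ is genuinely a complex plane and that the four are distinct. The rank of a skew-symmetric matrix is even, and the locus of skew-symmetric $8\times 8$ matrices of rank $\leq 4$ has codimension $\binom{4}{2}=6$ in the $28$-dimensional space of all such matrices, whereas the Pfaffian hypersurface (rank $\leq 6$) has codimension $1$. A generic pencil is a line that meets the Pfaffian hypersurface in its four points and avoids the deeper rank stratum, so at each $[\lambda_k:\mu_k]$ the rank of $N(\lambda_k,\mu_k)$ is exactly $6$ and $\Pi_k$ has complex dimension $2$. This yields four complex planes, pairwise distinct for generic $M_1,M_2$.

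The main obstacle is making all these \emph{generic} claims simultaneously precise, and the cleanest route is to exhibit one explicit pencil realising them. Taking $M_1=\diag(J,J,J,J)$ and $M_2=\diag(a_1J,a_2J,a_3J,a_4J)$ with $J=\left(\begin{smallmatrix}0&1\\-1&0\end{smallmatrix}\right)$ and distinct nonzero $a_k$, one computes $\mathrm{Pf}(N(\lambda,\mu))=\prod_{k=1}^4(\lambda+\mu a_k)$, which has four distinct roots; at the $k$-th root the corresponding block vanishes while the others stay invertible, so $\Pi_k$ is exactly the $k$-th coordinate $\C^2$, and the four planes are visibly distinct. Since ``four distinct roots of the Pfaffian'', ``each kernel two-dimensional'' and ``the four planes distinct'' are each Zariski-open conditions in $(M_1,M_2)$ and all hold for this example, they hold for generic $M_1,M_2$, completing the argument.
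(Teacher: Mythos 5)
Your proposal is correct, and its starting point coincides with the paper's: the paper also observes that $\alpha_1,\alpha_2$ are collinear at $z$ precisely when $z$ solves the generalized eigenvalue problem for the pair $(M_1,M_2)$ (phrased there as $z$ being an eigenvector of $M_1^{-1}M_2$), which is the same as your condition $(\lambda M_1+\mu M_2)z=0$. Where you genuinely diverge is in how the generic structure of the solution set is established. The paper simply asserts that for generic $M_1,M_2$ the matrix $M_1^{-1}M_2$ has four distinct eigenvalues of algebraic multiplicity $2$, which leaves two real gaps: it does not explain why eigenvalues come in even multiplicity, and algebraic multiplicity $2$ alone does not give that each dependence component is a $2$-plane (one needs \emph{geometric} multiplicity $2$). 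Your Pfaffian argument supplies exactly these missing pieces: $\det N=\mathrm{Pf}(N)^2$ explains the even multiplicities, and avoidance of the codimension-$6$ rank-$\leq 4$ stratum gives $\operatorname{rank} N(\lambda_k,\mu_k)=6$, hence two-dimensional kernels. You also make ``generic'' precise via Zariski-openness anchored at the explicit pair $M_1=\diag(J,J,J,J)$, $M_2=\diag(a_1J,a_2J,a_3J,a_4J)$ --- amusingly, the very pair the paper itself uses immediately after the lemma to study the singularities of the $SU(7)$-structure. Two small remarks: the distinctness of the four planes is automatic rather than a genericity condition, since a vector in the kernels of two independent members of the pencil would satisfy $M_1z=M_2z=0$, contradicting invertibility; and your openness claim for ``each kernel two-dimensional'' is cleanest to justify by noting that the set of pairs whose pencil meets the rank-$\leq 4$ locus is the image of a closed subvariety of the space of pairs times $\p^1$ under a proper projection, hence closed.
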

\begin{proof}
    Denote $M_1,M_2$ the corresponding invertible and skew-symmetric matrices. $\alpha_1,\alpha_2$ are collinear at $z\in\C^8\backslash\{0\}$ iff $z$ is an eigenvector of the matrix $M_1^{-1}M_2$. If $M_1,M_2$ are chosen generically, the matrix $M_1^{-1}M_2$ has four distinct eigenvalues with algebraic multiplicity equal to $2$.
\end{proof}
This lemma together with Theorem \ref{multipletwisttheo} shows the first part of Theorem \ref{cp7}. For the singularities, take two generic matrices $M_1,M_2$ of the form $M_1=\diag(J,J,J,J)$ and $M_2=\diag(\lambda_1J,\lambda_2J,\lambda_3J,\lambda_4J)$ with complex $\lambda_i$ all different and non-zero. Denote $r(z)=|z|^2$ and $r_\lambda(z)=\lambda_1(|z_1|^2+|z_2|^2)+\lambda_2(|z_3|^2+|z_4|^2)+\lambda_3(|z_5|^2+|z_6|^2)+\lambda_4(|z_7|^2+|z_8|^2)$ so that the Gram-Schmidt process gives:\[\beta={}^tZ\bigg(M_2-\frac{r_\lambda(z)}{r(z)}M_1\bigg)dZ\] Introduce $\rho_i(z)=\lambda_i-\frac{r_\lambda(z)}{r(z)}$ to write concisely:\[\beta={}^tZ\diag(\rho_1(z)J,\rho_2(z)J,\rho_3(z)J,\rho_4(z)J)dZ\] In particular, we have:
\[\|\beta\|^2=|\rho_1|^2(|z_1|^2+|z_2|^2)+|\rho_2|^2(|z_3|^2+|z_4|^2)+|\rho_3|^2(|z_5|^2+|z_6|^2)+|\rho_4|^2(|z_7|^2+|z_8|^2) \] Then, we have:
\begin{align*}
    \beta\wedge\bar\beta&=\sum_{i,j}\rho_i\bar\rho_j(z_{2i-1}dz_{2i}-z_{2i}dz_{2i-1})\wedge(\bar z_{2j-1}d\bar z_{2j}-\bar z_{2j}d\bar z_{2j-1})\\&=\sum_{i,j}\rho_i\bar\rho_j(z_{2i-1}\bar z_{2j-1}dz_{2i}\wedge d\bar z_{2j}-z_{2i-1}\bar z_{2j} dz_{2i}\wedge d\bar z_{2j-1}-z_{2i}\bar z_{2j-1}dz_{2i-1}\wedge d\bar z_{2j}\\&+z_{2i}\bar z_{2j}dz_{2i-1}\wedge d\bar z_{2j-1})     
\end{align*} We see that the zeros of $\beta$ are where only one $\rho_i$ vanishes.
Take $Z_1\in\C^2$ of norm $1$ and consider the sequences $Z_{\varepsilon_1}=(Z_1,(\varepsilon,0),0,0)$ and $Z_{\varepsilon_2}=(Z_1,(0,\varepsilon),0,0)$. A long computation shows that:
\[\frac{\beta\wedge\bar\beta}{\|\beta\|^2}(Z_{\varepsilon_1})\underset{\varepsilon\to 0}{\to}dz_4\wedge d\bar z_4,\,\frac{\beta\wedge\bar\beta}{\|\beta\|^2}(Z_{\varepsilon_2})\underset{\varepsilon\to 0}{\to}dz_3\wedge d\bar z_3\] which proves that the limit does not exist. 
\subsection{Revisiting $SU(3)$-structures on toric $\C\p^1$-bundles}
    In \cite{Larfors_2010}, Larfors examines toric $\C\p^1$-bundles over the Hirzebruch surfaces $\F_n$. \\ \indent We start by recalling basic facts and definitions about these surfaces, following \cite{hirzebruchgauduchon}. There are several ways of defining Hirzebruch surfaces, here we focus on the toric description. 
    \\\indent Consider the space $Y=\C^2\backslash\{0\}\times\C^2\backslash\{0\}$ together with an action $\sim_n$ of the compact torus $T^2$, where $n\in\Z$, given by :
    \[\forall (\lambda,\mu)\in T^2,\forall z=(z_0,z_1)\times(z'_0,z'_1)\in Y, (\lambda,\mu)\cdot z=(\lambda z_0, \lambda z_1)\times(\lambda^n\mu z'_0,\mu z'_1).\]
 Then, the Hirzebruch surface $\F_n$ is $Y/\sim_n$. $\F_n$ is then by construction  a toric surface. \\ \indent We now consider $X$ a toric $\C\p^1$-bundle over $\F_n$. From Delzant theory, see e.g \cite{calderbank2003guilleminformulakaehlermetrics}, we can describe $X$ as a toric quotient of $\C^2\backslash\{0\}\times\C^2\backslash\{0\}\times \C^2\backslash\{0\}$ by $T^3$ given by the charges :
 \[q^1=(1,1,n,0,0,a), \, q^2=(0,0,1,1,0,b),\, q^3=(0,0,0,0,1,1),\] 
where $a,b\in\Z$. \\ \indent Note that the first four components of the charges correspond to the charges of $\F_n$. The parameters $a,b$ describe how twisted $X$ is in $\C\p^1$ and $q^3$ corresponds to the $\C\p^1$ fiber of $X$. \\ \indent We thus work in $\C^6$ with its usual $SU(6)$-structure. The $T^3$ action on $\C^6$ generates the holomorphic vector fields:
\[\xi_1:=z_1\partial_1+z_2\partial_2+nz_3\partial_3+az_6\partial_6, \xi_2:=z_3\partial_3+z_4\partial_4+bz_6\partial_6, \xi_3:=z_5\partial_5+z_6\partial_6,\] which gives the charge of $\Omega_0=dz_1\wedge\cdots\wedge dz_6$: \[q(\xi_1)=2+n+a,q(\xi_2)=2+b, q(\xi_3)=2.\] The moment map is \[\mu(z):=\bigg(|z_1|^2+|z_2|^2+n|z_3|^2+a|z_6|^2,|z_3|^2+|z_4|^2+b|z_6|^2,|z_5|^2+|z_6|^2\bigg).\]
\indent In what follows, we assume that $a=2-n,b=-2$. On a positive regular value, set, as in \cite{Larfors_2010}, \[\alpha_1:=-z_2dz_1+z_1dz_2, \alpha_2:=-z_4dz_3+z_3dz_4+\frac{nz_3z_4}{|z_1|^2+|z_2|^2}(\overline{z_1}dz_1+\overline{z_2}dz_2)\]
\indent A simple computation shows that \[k_{\alpha_1}(V_1)=2i, k_{\alpha_1}(V_2)=0, k_{\alpha_1}(V_3)=0\] \[k_{\alpha_2}(V_1)=in, k_{\alpha_2}(V_2)=2i, k_{\alpha_2}(V_3)=0\]\indent Letting \[\alpha:=z_5\alpha_1+z_6\alpha_2\] we get a non-holomorphic twist form and therefore Theorem \ref{twistsimpletheo} can be applied to get an $SU(3)$-structure on $X$.
\subsection{Calabi-Yau cones over Fano Kähler-Einstein manifolds} In this paragraph, we use the theory of Fano Kähler-Einstein manifolds to provide a setting where Theorem \ref{twistsimpletheo} can be applied. We first recall the main facts about Fano Kähler-Einstein manifolds:
\begin{Def}
    An almost complex manifold $(Y,J)$ is called Fano if: \[c_1(Y,J)=c_1(-K_Y)>0,\] where $K_Y$ is the canonical line bundle and $-K_Y$ the anticanonical line bundle. Alternatively, $-K_Y$ is an ample line bundle.
\end{Def}
The most important result is the following, see e.g \cite{sparks2010sasakieinsteinmanifolds}:
\begin{prop}
   Let $(Y,J,\omega)$ be a Fano Kähler-Einstein manifold. Then, $K_Y\backslash Y$ is a Calabi-Yau cone. 
\end{prop}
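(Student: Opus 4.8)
The plan is to realize $K_Y\setminus Y$, the total space of the canonical bundle with its zero section removed, as the Ricci-flat Kähler cone over the Sasaki--Einstein link determined by the Kähler--Einstein metric on $Y$. Two structures come essentially for free. First, the total space of $K_Y$ carries a tautological holomorphic $(n+1,0)$-form $\Omega$: in local coordinates $z_1,\dots,z_n$ on $Y$ with fiber coordinate $w$ dual to $dz_1\wedge\cdots\wedge dz_n$, one sets $\Omega=dw\wedge dz_1\wedge\cdots\wedge dz_n$, which is well defined and nowhere vanishing, so the canonical bundle of the total space is trivial — a necessary feature of a Calabi--Yau. Second, since Fano Kähler--Einstein means $\mathrm{Ric}(\omega)=\lambda\omega$ with $\lambda>0$ and the rescaling $\omega\mapsto t\omega$ rescales $\lambda$, I normalize so that $\mathrm{Ric}(\omega)=2(n+1)\omega$.

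Next I would build the cone metric by the Calabi ansatz. The metric $\omega$ induces a Hermitian metric $h$ on $\det T^{1,0}Y=-K_Y$ (via $\omega^n/n!$) and dually on $K_Y$, whose Chern curvature is $-\mathrm{Ric}(\omega)=-2(n+1)\omega$. Writing $s(\xi)=\|\xi\|_h^2$ for the fiber-norm function on $K_Y\setminus Y$, I set the radial function $r^2:=s^{1/(n+1)}$ and define $\omega_C:=\tfrac{i}{2}\partial\bar\partial r^2$. Computing $\partial\bar\partial s^{1/(n+1)}$ and using that $i\partial\bar\partial\log s$ equals the pullback of minus the Chern curvature of $(K_Y,h)$, namely $2(n+1)\pi^*\omega$, splits $\omega_C$ into a strictly positive fiber/radial part and the base part $r^2\,\pi^*\omega$. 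The power $1/(n+1)$ and the Einstein normalization are precisely what make $\omega_C$ positive-definite and homogeneous of degree $2$ under fiberwise scaling, so that $r\partial_r$ is the Euler field of a genuine Kähler cone $g_C=dr^2+r^2 g_S$ with Sasaki link $S=\{r=1\}$.

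Finally I would verify the Calabi--Yau condition, cleanest to phrase in the $SU(n+1)$-language of this paper: I claim $(\Omega,\omega_C)$ satisfies the $SU(n+1)$-equations, i.e.\ $\Omega\wedge\omega_C=0$ (automatic, as $\Omega$ is of type $(n+1,0)$) and $\Omega\wedge\overline{\Omega}=c_{n+1}\,\omega_C^{\,n+1}$ up to a positive constant. Equivalently $|\Omega|_{\omega_C}$ is constant, which forces $\mathrm{Ric}(\omega_C)=-i\partial\bar\partial\log\det g_C=0$. To see the norm is constant, I would evaluate $\det g_C$ against the trivialization $\Omega$: it factors into a fiber contribution depending only on $s$, whose $i\partial\bar\partial\log$ reproduces $-2(n+1)\pi^*\omega$, and a base volume contribution, whose $i\partial\bar\partial\log$ is the Ricci form $\mathrm{Ric}(\omega)=2(n+1)\omega$; these cancel exactly because of the normalization fixed in the first step. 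Hence $\omega_C$ is Ricci-flat Kähler and $(K_Y\setminus Y,\omega_C,\Omega)$ is a Calabi--Yau cone.

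The main obstacle is the simultaneous positivity and Ricci-flatness of the cone ansatz: the curvature contributed by the radial/fiber direction and the transverse Ricci curvature of the base must cancel, and this happens only for the specific power $r^2=s^{1/(n+1)}$ together with the Einstein constant $2(n+1)$. Checking that these two normalizations are compatible — equivalently, that the one-variable ODE for the Kähler potential $\phi(s)$ with $\omega_C=i\partial\bar\partial\phi$ admits the cone solution — is the crux; the tautological form $\Omega$ and the holomorphic scaling action then package the outcome into the Calabi--Yau cone statement.
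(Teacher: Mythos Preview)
The paper does not actually prove this proposition: it states it as a known fact with a reference to Sparks' survey on Sasaki--Einstein manifolds, and only records the local expression $\Omega_{K_Y}=d(y\,dz_1\wedge\cdots\wedge dz_n)$ for the holomorphic volume form before moving on. So there is no ``paper's own proof'' to compare against beyond that citation.

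Your sketch is the standard Calabi-ansatz argument that underlies the cited reference, and the strategy is correct: the tautological $(n{+}1,0)$-form trivializes the canonical bundle of the total space, and the potential $r^2=s^{1/(n+1)}$ with the Einstein normalization on the base produces a Kähler cone metric whose Ricci form vanishes because the fiber curvature and the base Ricci curvature cancel. Your holomorphic volume form agrees with the paper's, since $d(y\,dz_1\wedge\cdots\wedge dz_n)=dy\wedge dz_1\wedge\cdots\wedge dz_n$. The only point where one should be careful is the precise matching of constants (the Einstein constant, the power $1/(n+1)$, and the factor in $\omega_C=\tfrac{i}{2}\partial\bar\partial r^2$), which is convention-dependent; you correctly identify this as the crux, and once fixed the rest is bookkeeping. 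In short, your proposal supplies exactly the argument the paper chose to outsource.
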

More explicitly, the holomorphic volume form is given locally by: \[\Omega_{K_Y}=d(ydz_1\wedge\cdots\wedge dz_n),\] where the $(z_k)$ are holomorphic coordinates on $Y$ and $y$ the fiber coordinate. We can now give a proof of Theorem \ref{cones}:
\begin{proof}
    Recall that the Fano index of $Y$ is defined as the maximal integer $r$ such that there exists a holomorphic line bundle $L$ satisfying $-K_Y=L^{\otimes r}$. In what follows, we assume that $r$ is a multiple of $4$ so that we can write $-K_Y=L^{\otimes 4}$ with $L$ a very ample line bundle. Denote $\pi : z\in L^{-1}\backslash Y\mapsto z^{\otimes 4}\in K_Y\backslash Y$ and consider the fiberwise $S^1$ action on $L^{-1}\backslash Y$. This action extends to $K_Y\backslash Y$ to make $\pi$ an $S^1$-equivariant map. Using the local formula of $\Omega_{K_Y}$, we see that $\Omega_{K_Y}$ is of charge $4$ for this action and thus $\Omega=\pi^*\Omega_{K_Y}$ is a holomorphic volume form on $L^{-1}\backslash Y$ of charge $4$ with respect to the fiberwise action of $S^1$ and pulling back the Hermitian structure of $K_Y\backslash Y$, we have a Calabi-Yau structure on $L^{-1}\backslash Y$ for which the fiberwise $S^1$ action is Hamiltonian. Kodaira's embedding theorem gives a holomorphic $S^1$-equivariant map $\Phi : L^{-1}\backslash Y\hookrightarrow\C^N\backslash\{0\}$ where $N=\dim H^0(Y,\Or(L))$. Without loss of generality, we can assume $N$ to be even and if $\beta$ is a holomorphic twist form of charge $2$ on $\C^N\backslash\{0\}$, its pullback $\alpha$ defines a $(1,0)$-form which is horizontal and of charge $2$ for the fiberwise action of $S^1$ on $L^{-1}\backslash Y$. Theorem \ref{twistsimpletheo} applies and we obtain an $SU(n)$-structure on $Y$ with possible singularities. Eventually, this also works by taking any smooth section of the vector bundle $(T^{1,0}Y)^*\otimes L^{\otimes 2} $.
\end{proof}
\textit{Application :} This gives a new perspective on how to get an $SU(4n-1)$-structure on $\C\p^{4n-1}$. Indeed, if $X=\C\p^{4n-1}$, it is well-known that $-K_X=\Or(4n)=\Or(n)^{\otimes 4}$. The Kodaira embedding theorem then gives a map $\varphi : \Or(-n)\backslash\C\p^{4n-1}\to\C^{2m}\backslash\{0\}$ where $2m=\dim H^0(X,\Or(n))=\binom{5n-1}{n}$. $\varphi$ is explicitly given by: \[\varphi([z_0,\cdots,z_{4n-1}])=[z_0^n,z_0^{n-1}z_1,\cdots,z_{4n-1}^n],\]  the components of the image of $\varphi$ are precisely the homogeneous monomials of degree $n$. Theorem \ref{cones} applies and provides to any $\C\p^{4n-1}$ an $SU(4n-1)$-structure. In particular for $n=1$, we exactly get back the $SU(3)$ structures of Theorem \ref{cp3}. 
\section{Torsion of $SU(3)$-structures on $\C\p^3$} 
In this section, we study the torsion of the $SU(3)$-structures on $\C\p^3$ given by Theorem \ref{cp3}. We first recall that if $(X,\omega,\Omega)$ is a manifold with an $SU(3)$-structure, its torsion is characterized by: \[d\omega=\frac{3}{2}\Ima(\overline{W_1}\Omega)+W_4\wedge\omega+W_3,\] \[d\Omega=W_1\omega^2+W_2\wedge\omega+\overline{W_5}\wedge\Omega,\] where $W_1$ is a smooth complex function, $W_2$ is a primitive $(1,1)$ form, $W_3$ is a real primitive $(1,2)+(2,1)$ form, $W_4$ is a real $(1,0)+(0,1)$ form and $W_5$ is a complex $(1,0)$ form. The $W_i$ are known to be the torsion classes and were first described by Gray and Hervella in \cite{Grayhervella}. Each $W_i$ contributes to a special feature of the $SU(3)$-structure, e.g, the integrability of the complex structure is entirely described by $W_1$ and $W_2$. \\
\indent Let $(\C^4,\omega_0,\Omega_0)$ be the canonical $SU(4)$-structure on $\C^4$. Let $M$ be a complex skew-symmetric and invertible $4\times 4$ complex matrix and $\alpha_M$ the corresponding holomorphic twist form introduced in the Propositon \ref{su(3)cp3}. The $SU(3)$-structure on $\C\p^3$ is given by \[\Omega_M:=\frac{1}{\|\alpha_M\|^2}\overline{\alpha_M}\wedge\hat\Omega_M, \omega_M:=\omega_0-\frac{i}{\|\alpha_M\|^2}\alpha_M\wedge\overline{\alpha_M}.\] The key observation to computing the torsion classes is: \begin{lemma} Let $M$ be a skew-symmetric and invertible $4\times 4$ complex matrix and $\alpha_M$ the corresponding holomorphic twist form. 
    Then, there exists $\lambda_M$ a smooth complex function and $\eta_M$ a $(1,0)$-form such that \[d\alpha_M=\lambda_M\hat\Omega_M+\alpha_M\wedge\eta_M.\]
\end{lemma}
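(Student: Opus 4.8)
The plan is to establish the identity on $\C^4\backslash\{0\}$, where the three forms $d\alpha_M$, $\hat\Omega_M=\overline{\alpha_M}\cdot(\iota_\xi\Omega_0)$ and $\alpha_M$ are all globally defined; restriction to the level set and descent to $\C\p^3$ are then automatic. Since $\alpha_M$ is a nowhere-vanishing $(1,0)$-form, the division lemma for forms says that a $(2,0)$-form $\psi$ is of the shape $\alpha_M\wedge\eta$ for some $1$-form $\eta$ if and only if $\alpha_M\wedge\psi=0$. So it suffices to produce a smooth function $\lambda_M$ with $\alpha_M\wedge(d\alpha_M-\lambda_M\hat\Omega_M)=0$, i.e. to show that the two $(3,0)$-forms $\alpha_M\wedge d\alpha_M$ and $\alpha_M\wedge\hat\Omega_M$ are pointwise proportional, the ratio being the desired $\lambda_M$. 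A type argument will upgrade the resulting $\eta$ to a $(1,0)$-form $\eta_M$, since $\alpha_M\wedge\eta^{0,1}=0$ forces $\eta^{0,1}=0$.

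Two computations drive the argument. First, because $\alpha_M$ is horizontal ($\iota_\xi\alpha_M={}^tZMZ=0$) and of charge $2$, Cartan's formula gives $\iota_\xi\,d\alpha_M=\lied_\xi\alpha_M=2\alpha_M$; concretely $d\alpha_M=\sum_{i,j}m_{ij}\,dz_i\wedge dz_j$ and $\iota_\xi(dz_i\wedge dz_j)=z_i\,dz_j-z_j\,dz_i$, which after using skew-symmetry of $M$ yields $2\alpha_M$. Second, since $\alpha_M\wedge\Omega_0=0$ for degree reasons and $\iota_\xi\alpha_M=0$, contracting $\alpha_M\wedge\Omega_0$ by $\xi$ gives $\alpha_M\wedge(\iota_\xi\Omega_0)=0$; contracting this relation by $\overline{\alpha_M}^\sharp$, using that contraction is an antiderivation and that $\overline{\alpha_M}\cdot\alpha_M=\|\alpha_M\|^2$, produces the clean identity $\alpha_M\wedge\hat\Omega_M=\|\alpha_M\|^2\,\iota_\xi\Omega_0$. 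In particular $\alpha_M\wedge\hat\Omega_M$ is nowhere-vanishing on $\C^4\backslash\{0\}$.

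The proportionality is then forced by a pointwise count. Both $\alpha_M\wedge d\alpha_M$ and $\alpha_M\wedge\hat\Omega_M$ are $(3,0)$-forms annihilated by $\iota_\xi$: indeed $\iota_\xi(\alpha_M\wedge d\alpha_M)=-\alpha_M\wedge\iota_\xi d\alpha_M=-2\alpha_M\wedge\alpha_M=0$, while $\iota_\xi(\|\alpha_M\|^2\iota_\xi\Omega_0)=0$. Hence both lie in the subspace $W$ of $(3,0)$-forms $\Psi$ with $\alpha_M\wedge\Psi=0$ and $\iota_\xi\Psi=0$. Choosing at each point a $(1,0)$-coframe $\theta^1,\dots,\theta^4$ dual to a frame whose first vector is $\xi$ and with $\alpha_M=\theta^2$ (possible since $\alpha_M(\xi)=0$ and $\alpha_M\neq0$), a direct check shows $W=\C\,\theta^2\wedge\theta^3\wedge\theta^4$ is one-dimensional, spanned by the nonvanishing form $\alpha_M\wedge\hat\Omega_M=\|\alpha_M\|^2\iota_\xi\Omega_0$. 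Therefore $\alpha_M\wedge d\alpha_M=\lambda_M\,\alpha_M\wedge\hat\Omega_M$ for a function $\lambda_M$, and the division lemma together with the type argument finish the proof.

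I expect the main obstacle to lie not in any single calculation but in arranging the two algebraic identities so that the one-dimensionality of $W$ is transparent; the crucial point to get right is the nonvanishing of $\alpha_M\wedge\hat\Omega_M$ on $\C^4\backslash\{0\}$, since it is exactly this that guarantees $\lambda_M$ is a globally smooth function rather than one defined only away from a singular locus.
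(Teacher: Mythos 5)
Your proof is correct, but it takes a genuinely different route from the paper's. The paper's argument is fully computational: it exploits the fact that $d\alpha_M=\sum_{i,j}m_{ij}\,dz_i\wedge dz_j$ has constant coefficients to establish the identity $d\alpha_M\wedge d\alpha_M=8\,\mathrm{Pf}(M)\,\Omega_0$, and then contracts successively by $\xi$ and by $\overline{\alpha_M}$ to produce the decomposition with \emph{explicit} data, namely $\lambda_M=\frac{2\,\mathrm{Pf}(M)}{\|\alpha_M\|^2}$ and $\eta_M=\frac{\overline{\alpha_M}\cdot d\alpha_M}{\|\alpha_M\|^2}$. You instead obtain the proportionality $\alpha_M\wedge d\alpha_M=\lambda_M\,\alpha_M\wedge\hat\Omega_M$ abstractly, from the pointwise observation that $(3,0)$-forms on $\C^4$ annihilated by $\iota_\xi$ form a line spanned by the nowhere-vanishing form $\alpha_M\wedge\hat\Omega_M=\|\alpha_M\|^2\,\iota_\xi\Omega_0$, and then conclude by the division lemma and a type argument; your two supporting identities ($\iota_\xi d\alpha_M=2\alpha_M$ from Cartan's formula and the contraction identity for $\alpha_M\wedge\hat\Omega_M$) are correct, and note that the contraction step of the paper, i.e.\ $\alpha_M\wedge d\alpha_M=2\,\mathrm{Pf}(M)\,\iota_\xi\Omega_0$, is exactly your proportionality with the constant made explicit. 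Your approach is more structural: it isolates the real reason the decomposition exists (one-dimensionality of the relevant space) and applies verbatim to any $(2,0)$-form whose contraction with $\xi$ is proportional to $\alpha_M$, not just to $d\alpha_M$ of a holomorphic twist form. What it does not deliver is the explicit value of $\lambda_M$; the paper needs this immediately after the lemma, where it records that $\lambda_M$ is nowhere zero (because $\mathrm{Pf}(M)\neq 0$ for an invertible skew-symmetric matrix), a fact used in the proof of the torsion theorem where $\frac{1}{\lambda_M}$ appears. To recover that nonvanishing from your setup you would still have to evaluate $\alpha_M\wedge d\alpha_M$, i.e.\ perform the Pfaffian computation; so your proof suffices for the lemma as stated, but the paper's computation is not dispensable for the later applications.
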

\begin{proof}
    $\alpha_M$ being holomorphic, $d\alpha_M$ is of type $(2,0)$. A simple computation shows that $d\alpha_M\wedge d\alpha_M=8Pf(M)\Omega_0$ where $Pf(M)$ is the Pfafian of $M$. Contracting by $\xi$ and $\overline{\alpha_M}$ gives $\lambda_M=\frac{2Pf(M)}{\|\alpha_M\|^2}$ and $\eta_M=\frac{\overline{\alpha_M}\cdot d\alpha_M}{\|\alpha_M\|^2}$.
\end{proof}
In particular, we note that $\lambda_M$ is nowhere zero. With this lemma, we can give a proof of Theorem \ref{torsion}:
\begin{proof}
    Observe that \[d\Omega_M\wedge\omega_M=\Omega_M\wedge d\omega_M=W_1\omega_M^3.\] Using the previous lemma and the closeness of $\omega_0$, we get that \[\Omega_M\wedge d\omega_M=-i\overline{\lambda_M}\Omega_M\wedge\overline{\Omega_M}=\frac{4}{3}\overline{\lambda_M}\omega_M^3.\] Then, we compute $d\omega_M$ to get $W_4$ and $W_3$: \begin{align*}
        d\omega_M&=d(\omega_0-\frac{i}{\|\alpha_M\|^2}\alpha_M\wedge\overline{\alpha_M})\\
        &=-\frac{d\|\alpha_M\|^2}{\|\alpha_M\|^2}\wedge(\omega_M-\omega_0)+\frac{2}{\|\alpha_M\|^2}\Ima(d\alpha_M\wedge\overline{\alpha_M})\\
        &=-\frac{d\|\alpha_M\|^2}{\|\alpha_M\|^2}\wedge(\omega_M-\omega_0)+\frac{2}{\|\alpha_M\|^2}\Ima(\|\alpha_M\|^2\lambda_M\Omega_M-\alpha_M\wedge\overline{\alpha_M}\wedge\eta_M)\\
        &=-\frac{d\|\alpha_M\|^2}{\|\alpha_M\|^2}\wedge(\omega_M-\omega_0)+\frac{3}{2}\Ima(\overline{W_1}\Omega_M)-2\Ree(\eta_M)\wedge(\omega_M-\omega_0).
    \end{align*}
    A simple computation shows that \[-\frac{d\|\alpha_M\|^2}{\|\alpha_M\|^2}=\Ree(\eta_M).\] which simplifies $d\omega_M$ by \[d\omega_M=\frac{3}{2}\Ima(\overline{W_1}\Omega_M)-\Ree(\eta_M)\wedge\omega_M+\Ree(\eta_M)\wedge\omega_0,\] giving $W_4$ and $W_3$ as expected. Eventually, we compute $W_2$ and $W_5$: \begin{align*}
        d\Omega_M&=d(\frac{1}{\|\alpha_M\|^2}\overline{\alpha_M}\wedge\hat\Omega_M)\\
        &=-\frac{d\|\alpha_M\|^2}{\|\alpha_M\|^2}\wedge\Omega_M+\frac{1}{\|\alpha_M\|^2}(d\overline{\alpha_M}\wedge\hat\Omega_M-\overline{\alpha_M}\wedge d\hat\Omega_M)\\
        &=\Ree(\eta_M)\wedge\Omega_M+\frac{1}{\|\alpha_M\|^2}(2\overline{\lambda_M}\|\alpha_M\|^2\hat\omega_M^2-\|\alpha_M\|^2\overline{\eta_M}\wedge\Omega_M-\overline{\alpha_M}\wedge d\hat\Omega_M).\\
    \end{align*}
    By taking the differential of the equation of the lemma, we obtain that \[d\hat\Omega_M=-\Ree(\eta_M)\wedge\hat\Omega_M-\eta_M\wedge\hat\Omega_M+\frac{1}{\lambda_M}\alpha_M\wedge d\eta_M,\] and plugging this into $d\Omega_M$ allows us to write :
    \begin{align*}
        d\Omega_M&=\Ree(\eta_M)\wedge\Omega_M+2\overline{\lambda_M}\hat\omega_M^2-\overline{\eta_M}\wedge\Omega_M-\Ree(\eta_M)\wedge\Omega_M-\eta_M\wedge\Omega_M\\&+\frac{1}{\|\alpha_M\|^2\lambda_M}\alpha_M\wedge\overline{\alpha_M}\wedge d\eta_M\\
        &=-2\Ree(\eta_M)\wedge\Omega_M+2\overline{\lambda_M}(\omega_M^2+\frac{i}{\|\alpha_M\|^2}\alpha_M\wedge\overline{\alpha_M}\wedge\omega_M)\\&+\frac{1}{\lambda_M\|\alpha_M\|^2}\alpha_M\wedge\overline{\alpha_M}\wedge d\eta_M\\
      &=W_1\omega_M^2+\frac{2\overline{\lambda_M}}{3}\omega_M^2+\frac{2i\overline{\lambda_M}}{\|\alpha_M\|^2}\alpha_M\wedge\overline{\alpha_M}\wedge\omega_M\\&+\frac{1}{\lambda_M\|\alpha_M\|^2}\alpha_M\wedge\overline{\alpha_M}\wedge d\eta_M-2\Ree(\eta_M).
    \end{align*}
    We observe that $\alpha_M\wedge\overline{\alpha_M}$ and $d\eta_M$ are both of type $(1,1)$ which gives $W_2$ and consequently $W_5$. The structure is LT iff $\Ree(\eta_M)=0$ iff $\alpha_M$ has constant norm on the sphere $S^7$. The norm of $\alpha_M$ is precisely \[\|\alpha_M\|^2=z^*M^*Mz,\] and usual arguments in linear algebra shows that this is constant iff \[M^*M=\mu I_4,\] for some $\mu$ a real number. If $M$ is of this form, then \[d\eta_M=-i|\lambda_M|^2\omega_0,\] which proves the formula for $W_2$.
\end{proof}
\noindent\textbf{Remark 5.} It is well-known that $\C\p^3$ admits a nearly Kähler structure \cite{butruille2006homogeneousnearlykahlermanifolds}. Our computation shows that $W_2$ is never zero, an interesting question then would be to know if we could obtain the nearly Kähler structure of $\C\p^3$ by a similar approach than ours.
\printbibliography
\end{document}